\documentclass[12pt, twoside, reqno] {amsart}

\usepackage{amsfonts}
\usepackage{amssymb}
\usepackage{latexsym}
\usepackage{graphicx,graphics}
\usepackage{epsf,graphicx,latexsym%
%subfigure
}

\newcommand{\be} {\begin{eqnarray}}
\newcommand{\ee} {\end{eqnarray}}
\newcommand{\bep} {\begin{eqnarray*}}
\newcommand{\eep} {\end{eqnarray*}}

\textwidth 135mm \textheight 200mm

\newcommand {\J}{\mathop{\mathcal{J}}\nolimits}

%\newcommand {\Fan}{\mathop{\rm Fan}\nolimits}

%\newcommand {\Snail}{\mathop{\rm Snail}\nolimits}

%\newcommand {\Univ}{\mathop{\rm Univ}\nolimits}
%{\mathop{\mathfrak{U}}\nolimits(\Delta)}

%\newcommand {\hol}{\mathop{\rm hol}\nolimits}
\newcommand {\Hol}{\mathop{\rm Hol}\nolimits}

\newcommand {\Id}{\mathop{\rm Id}\nolimits}
\renewcommand {\Im}{\mathop{\rm Im}\nolimits}
\renewcommand {\Re}{\mathop{\rm Re}\nolimits}

\newcommand {\JJ}{\mathcal{J}}

\newcommand {\A}{\mathcal{A}}
\newcommand {\BB}{\mathcal{B}}
%\newcommand {\M}{{\cal M}}

%\font\bbb = msbm10
%\def\Bbb#1{\hbox{\bbb #1}}
\newcommand{\R}{{\mathbb R}}

\newcommand{\C}{{\mathbb C}}

\newcommand {\D}{\mathbb{D}}

\newtheorem{remar}{Remark}[section]
\newtheorem{examp}{Example}[section]

\newtheorem{corol}{Corollary}[section]
\newtheorem{propo}{Proposition}[section]
\newtheorem{theorem}{Theorem}[section]
\newtheorem{lemma}{Lemma}[section]

\newcommand{\rema}{\begin{remar}\rm}
\newcommand{\erema}{$\blacktriangleright$\end{remar}}

\newcommand{\exa}{\begin{examp}\rm}
\newcommand{\eexa}{$\blacktriangleright$\end{examp}}

\def\lwvec(#1 #2){\linewd 0.1
           \lvec(#1 #2)
           \linewd 0.05}

\makeatletter
\@namedef{subjclassname@2020}{\textup{2020} Mathematics Subject Classification}
\makeatother

\begin{document}

\title[Estimates of resolvents]{Estimates on some functionals over non-linear resolvents}
%{Estimates on Taylor's coefficients and the Fekete--Szeg\"{o} functional over non-linear resolvents}

\author[M. Elin]{Mark Elin}

\address{Department of Mathematics,
         Ort Braude College,
         Karmiel 21982,
         Israel}

\email{mark$\_$elin@braude.ac.il}

\author[F. Jacobzon]{Fiana Jacobzon}

\address{Department of Mathematics,
         Ort Braude College,
         Karmiel 21982,
         Israel}

\email{fiana@braude.ac.il}

\keywords{non-linear resolvent, Fekete--Szeg\"{o} problem, coefficient estimates, semi-complete vector field, subordination}
\subjclass[2020]{Primary 30C50; Secondary 30A10}

\begin{abstract}
Estimation of linear and quadratic functionals over different classes of univalent functions is one of the classical problems in geometric function theory. In this paper we solve the problem over some classes of so-called non-linear resolvents, which arise as a fruitful tool in dynamic systems. Sharp estimates on early Taylor coefficients and the Fekete--Szeg\"{o} functional are established.

%{\footnotesize Key words and phrases: semigroup of holomorphic
%mappings, semigroup of composition operators, infinitesimal
%generator, dissipative operator, function convex in one direction.
\end{abstract}

\maketitle
%\centerline\today

\section{Introduction}\label{sect-intro}

Let $D$ be a domain in the complex plane $\C$.  Denote the set of holomorphic functions on $D$ by $\Hol(D,\C)$, and by $\Hol(D) := \Hol(D,D)$, the set of all holomorphic self-mappings of $D$.  We use the notion $D_r(c)$ for the open disk of radius $r$ centered at $c \in \C$ and denote $D_r:=D_r(0)$. Also we denote $\D=D_1$, the open unit disk.

Let $\Omega$ be the subclass of $\Hol(\D)$ consisting of functions vanishing at the origin:
\begin{equation}\label{def-U}
\Omega=\{ \omega \in \Hol(\D):\ \omega(0)=0 \}.
\end{equation}
The identity mapping on $\D$ will be denoted by $\Id$.

In this paper we deal with  the class of so-called non-linear resolvents. To define this class, we need the notion of semi-complete vector field.
Recall that a mapping $f\in\Hol(\D,\C)$ is called a semi-complete vector field on $\D$  if for every $z\in\D$ the Cauchy problem
\begin{equation}  \label{nS1}
\left\{
\begin{array}{l}
\frac{\partial u(t,z)}{\partial t}+f(u(t,z))=0,         \vspace{2mm} \\
u(0,z)=z%
\end{array}%
\right.
\end{equation}%
has a unique solution $u=u(t,z)\in\D$ for all $t\geq 0$. In this case, the unique solution of \eqref{nS1} forms a semigroup of holomorphic self-mappings of the open unit disk $\D$; see, for example, \cite{B-C-DM-book, E-R-Sbook, E-S-book, R-S1, SD}. The next theorem~gives criteria for a holomorphic function to be a semi-complete vector field.
 \begin{theorem}[see \cite{R-S1, SD, E-R-Sbook, E-S-book} for detail] \label{teorA}
 Let $f\in \Hol(\D , \C),\  f\not\equiv0$. The following statements are equivalent:
 \begin{enumerate}
 \item[(i)] $f$ is a semi-complete vector field on $\D$;

\item[(ii)] there exist a point $\tau\in \overline\D$ and a function $p\in\Hol(\D,\C)$ with ${\Re p(z)\ge0}$ such that
\begin{equation}\label{b-p}
f(z)=(z-\tau )(1-z\overline{\tau })p(z),\quad z\in\D,
\end{equation}
and this representation is unique;
\item[(iii)] $f$ satisfies the so-called range condition:
\[
\left(\Id +rf\right)(\D)\supset\D\qquad    \mbox{for all }\quad r>0,
\]
and $G_r:=(\Id+rf)^{-1}$ is a well-defined self-mapping of $\D$.
\end{enumerate}
 \end{theorem}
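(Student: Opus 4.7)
The plan is to prove the equivalence by the cyclic chain (i) $\Rightarrow$ (ii) $\Rightarrow$ (iii) $\Rightarrow$ (i), since each implication uses a different classical tool: boundary behavior of iterates for the first, a quantitative univalence/covering argument for the second, and an exponential-formula construction for the third.

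For (i) $\Rightarrow$ (ii), I would first identify the candidate point $\tau$ as the common Denjoy--Wolff point of the semigroup $\{F_t\}_{t\ge 0}$, where $F_t(z):=u(t,z)$. Since each $F_t$ is a holomorphic self-map of $\D$, the Denjoy--Wolff theorem supplies a unique $\tau\in\overline{\D}$ that is either an interior fixed point common to all $F_t$, or an attracting boundary fixed point. The next step is to define
\[
p(z):=\frac{f(z)}{(z-\tau)(1-z\overline{\tau})},
\]
and show it extends holomorphically across the (possible) zero of the denominator: in the interior-fixed-point case, $f(\tau)=0$ follows by differentiating $F_t(\tau)=\tau$ in $t$; in the boundary case, one uses a boundary Julia--Carath\'eodory argument to produce the required cancellation. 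The heart of the implication is then the inequality $\Re p(z)\ge 0$, obtained by differentiating the Schwarz--Pick inequality $\rho(F_t(z),F_t(w))\le \rho(z,w)$ (or, equivalently, $(1-|F_t(z)|^2)^{-1}|F_t'(z)|(1-|z|^2)\le 1$) at $t=0^+$. This infinitesimal Schwarz--Pick estimate translates exactly into $\Re p(z)\ge 0$, and uniqueness of the representation is automatic once $\tau$ is known to be the Denjoy--Wolff point.

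For (ii) $\Rightarrow$ (iii), I would fix $w\in \D$ and $r>0$ and study the equation $z+rf(z)=w$. Writing $h_r(z):=z+rf(z)-w$, the aim is to show $h_r$ has exactly one zero in $\D$ and no zero on $\partial\D$. The natural tool is the argument principle applied on circles $\{|z|=\rho\}$ with $\rho\nearrow 1$: using the Berkson--Porta factorization together with $\Re p(z)\ge 0$, one checks that on the boundary the term $rf(z)$ pushes $z$ \emph{outward} (in the sense that $\Re\bigl(\overline{z}f(z)\bigr)\ge 0$ on $\partial\D$), so $z+rf(z)$ has winding number $1$ about every interior point $w$. Uniqueness of the preimage then follows from the strict monotonicity built into $\Re p\ge 0$, yielding that $G_r=(\Id+rf)^{-1}$ is a well-defined holomorphic self-map of $\D$. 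The main obstacle here is handling the case $\tau\in\partial\D$, where the factor $(z-\tau)(1-z\overline{\tau})$ degenerates at $\tau$ and the inward-pointing estimate needs to be refined by a Julia-type inequality near $\tau$.

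For (iii) $\Rightarrow$ (i), once $G_r$ is a well-defined self-map of $\D$ for every $r>0$, I would recover the semigroup via the exponential (Crandall--Liggett) formula
\[
u(t,z):=\lim_{n\to\infty}G_{t/n}^{\,n}(z),
\]
using normal-family compactness in $\Hol(\D)$ to extract the limit and the resolvent identity $G_r\circ G_s = G_{r+s}$-type relations (or rather the semigroup property inherited from the iteration scheme) to verify $u(t+s,z)=u(t,u(s,z))$. Differentiation in $t$ then yields the Cauchy problem \eqref{nS1} with the prescribed $f$. I expect this last step to be the most technical, because the convergence of the iterated resolvents and the differentiability in $t$ require uniform estimates on $G_r$ as $r\to 0^+$; these in turn come from the Schwarz--Pick contractivity of each $G_r$ with respect to the hyperbolic metric, which is the hidden engine throughout the argument.
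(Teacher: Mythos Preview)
The paper does not prove this theorem at all: it is stated as a known result and the reader is referred to \cite{R-S1, SD, E-R-Sbook, E-S-book} for the proof. So there is no ``paper's own proof'' to compare your proposal against.

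That said, your cyclic scheme (i) $\Rightarrow$ (ii) $\Rightarrow$ (iii) $\Rightarrow$ (i) is essentially the route taken in those references. A few remarks on the sketch itself. For (i) $\Rightarrow$ (ii), the infinitesimal Schwarz--Pick computation you describe is exactly the Berkson--Porta argument; the boundary case is indeed handled by angular-limit/Julia--Carath\'eodory considerations, but note that in that case one does \emph{not} need to cancel a zero of the denominator inside $\D$ (since $\tau\in\partial\D$), only to check $\Re p\ge 0$. For (ii) $\Rightarrow$ (iii), the outward-pointing estimate $\Re(\overline{z}f(z))\ge 0$ you invoke is precisely the ``flow-invariance'' condition and is equivalent to (ii); the argument-principle proof works, though the cited books typically use a more direct Rouch\'e or fixed-point argument. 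For (iii) $\Rightarrow$ (i), your Crandall--Liggett formula is correct and is exactly how the cited references proceed, but be careful: the identity $G_r\circ G_s=G_{r+s}$ is \emph{false} for nonlinear resolvents (you hedge on this, rightly). What one actually uses is the consistency estimate $\|G_{t/n}^{\,n}-G_{t/m}^{\,m}\|\to 0$ coming from Schwarz--Pick contractivity, together with the fact that the limit inherits the semigroup law from the iteration, not from any resolvent composition identity.

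In short: your outline is sound and matches the standard literature proof, modulo the resolvent-identity slip; the paper itself simply quotes the result.
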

We notice that formula \eqref{b-p} is called the {\it Berkson--Porta representation} after the seminal work \cite{B-P} by Berkson and Porta. The mappings $G_r\in\Hol(\D),\ r>0,$ are called the {\it nonlinear resolvents} of a semi-complete vector field $f$, the net $\{G_r\}_{r>0}$ is the {\it resolvent family} for $f$. It is known that every non-linear resolvent is a univalent self-mapping of the open unit disk.

It follows from the uniqueness of the Berkson--Porta representation~\eqref{b-p} that every semi-complete vector field must have at most one null point in $\D $. Moreover, if the function $p$ in assertion (ii) satisfies $\Re p(z) > 0$, then this point is the \textit{Denjoy--Wolff point} for the semigroup  $\left\{u(t,\cdot)\right\} _{t\geq 0}$ defined by \eqref{nS1}  in the sense that
$ \tau= \lim\limits_{t\rightarrow \infty } u(t,z)$. Moreover, $\lim\limits_{r\to\infty}G_{r}(z)=\tau$ uniformly on compact subsets of $\D$.

In this paper we concentrate on the  case $\tau=0$.  Thus $f(z)=zp(z)$ with $\Re p(z)>0$  by the Berkson--Porta formula~\eqref{b-p}  and $\lim\limits_{r\to\infty} G_r(z)=0$.  In addition to their importance for dynamical systems (see \cite{E-R-Sbook, R-S1, SD}),  such resolvents form a very specific subclass of univalent self-mappings of the unit disk.

Recently, interesting geometric aspects of this class  were discovered in \cite{E-S-S} and \cite{E-J-21b}. For instance, it was established there that the resolvent family constitutes an inverse L{\oe}wner chain. Some covering and distortion results and the quasi-conformality of resolvents were proved. Further, the orders of starlikeness and of strong starlikeness were found. Also, it was shown there that the family of normalized resolvents converges to the identity mapping, uniformly on compact subsets of the unit disk.

The study of non-linear resolvents in the framework of geometric function theory naturally includes searching for sharp estimates on linear and non-linear functionals over this class as well as extremal functions. Note that the estimation of Taylor coefficients for different classes of analytic functions is a classical problem in  geometric function theory (see, for example, \cite{Dur}) starting from the famous Bieberbach conjecture.

Estimation of coefficient functionals over the class of non-linear resolvents is the aim of the current paper. More precisely, we consider the set of all semi-complete vector fields such that $f(0)=0$ with fixed derivative $f'(0)=q$. It can be seen from the Berkson--Porta representation~\eqref{b-p} that $\Re q \geq 0$. Since the case $\Re q=0$ is trivial, we assume that $\Re q>0$. We denote by $\JJ_r$ the set of non-linear resolvents $G_r=\left(\Id+rf\right)^{-1}$ of such vector fields. Our first problem is:

\vspace{1mm}

{\bf Problem 1:} {\it Find the sharp estimates and extremal functions for early Taylor coefficients over $\JJ_r$ and $\JJ:=\bigcup \JJ_r$.}

\vspace{1mm}

 We solve this problem for the second and third Taylor coefficients.

\vspace{1mm}

Concerning quadratic functionals, we  deal with the Fekete--Szeg\"{o}  functional that was introduced in  \cite{F-S},  found numerous applications and was studied by many mathematicians (see, for example, \cite[p. 124]{Pom} and \cite[p. 104]{Dur}). Recall that for an analytic function $h,$ $ h(z)=\sum\limits_{k=1}^{\infty}h_kz^k$ and $\lambda \in \C$, the Fekete--Szeg\"{o} functional is defined by
\begin{equation}\label{FS_funcl}
  \Phi(h, \lambda):=h_{1}h_{3}-\lambda h_{2}^2.
\end{equation}
It involves the Hankel determinant of second order  $H^2_1(f):=\Phi(h, 1)=\left|\begin{matrix}
                                      h_1 & h_2 \\
                                     h_2 & h_3
                                    \end{matrix}\right| $
and the Schwarzian derivative of $h$ at zero $\{h,0\}=\frac{6}{h_1^2}\cdot\Phi(h,1)$. The Fekete--Szeg\"{o}  problem is to find the sharp estimate on $\left| \Phi(\cdot, \lambda)\right|$ over a class of analytic functions.
Some general approaches to its solution were developed in \cite{ChKSug, Kanas}, see also reference therein. We estimate the Fekete--Szeg\"{o} functional over non-linear resolvents. Our aim is

\vspace{1mm}

{\bf Problem 2:} {\it Find the sharp estimates and extremal functions for the Fekete--Szeg\"{o}  functional over $\JJ_r$ and $ \JJ$.}

\vspace{1mm}
We solve this problem completely over $\JJ_r$ for every fixed $r>0$ and partially  over the union $ \JJ$.

Note that the sets $\JJ_r$ and $\JJ$ are invariant under the rotations $G\mapsto e^{-i \theta} G(e^{i \theta} \cdot)$. Therefore estimation of absolute values of the functionals in Problems 1 and 2 is equivalent to estimation of their real parts. 
It is worth mentioning that estimates on the second and third coefficients and  the Fekete--Szeg\"{o} functional over inverses for some families of functions were studied by many mathematicians. For some recent developments in this direction see, for example, \cite{YAI21}  and references therein.

Our approach is based on a conventional concept that by its definition  any resolvent family  consists of inverse functions for a one-parameter net $\{\Id+rf\}_{r>0}$. Hence we can use our previous results in \cite{E-J-21a} on families of inverse functions (see also \cite{MF20}).

Following the scheme suggested there, we introduce in the next section the subclass $\A_\psi$  of $\Hol(\D,\C)$ consisting of functions $F$ such that $F(z)/z$ takes values in a given half-plane and $F$ is invertible around zero. To be concrete, let fix a mapping $\psi \in \Hol(\D,\C),\  \psi(0)\neq0,$ of the open unit disk $\D$ onto a half-plane. Consider the class of  holomorphic functions
\begin{equation*}\label{classA}
\A_{\psi}:=\left\{F \in \Hol(\D,\C): \frac{F(z)}{z}\prec\psi \right\}.
\end{equation*}
In other words, $F \in \A_{\psi}$ if there exists a function $\omega \in \Omega$, see \eqref{def-U}, such that
\begin{equation}\label{subord}
F(z)=z\psi(\omega(z))\quad \text{ for all }\  z\in \D.
\end{equation}
We represent coefficients of such functions using determinants.

Clearly, every $F \in \A_\psi$ is locally univalent at the origin, and the inverse function $F^{-1}$ preserves $z=0$. Denote
\begin{equation*}\label{classB}
\BB_{\psi}:=\{F^{-1} : F\in \A_{\psi}\}.
\end{equation*}
We establish sharp estimates on early Taylor coefficients and the Fekete--Szeg\"{o} functional  over $\BB_\psi$.

It turns out that for an appropriate choice of the function $\psi$ the class $\BB_{\psi}$ coincides with $\JJ_r$.
This enables us in Section~\ref{sect-rigidity} to apply the previous results to non-linear resolvents and to solve Problems 1 and 2.
  Namely, Proposition~\ref{coro-resG} and Theorem~\ref{propo-estim-r} solve Problem 1 for $\JJ_r$, while Theorem~\ref{pr-5-4} solves it for $\JJ$. Theorem~\ref{th-phi-estim} solves Problem 2 for $\JJ_r$ and Theorem~\ref{th-appl for fekete} gives a partial solution of Problem 2 for the class $\JJ$. The results of this section complete and generalize earlier results obtained in~\cite{MF20}.

It is worth mentioning that in geometric function theory different coefficient functionals are studied as usual over families of  {\it normalized} univalent functions. By this reason we complete the paper presenting theorem that solves problems analogous to the problems above over the class of {\it normalized resolvents}.

\medskip

\section{Estimates over the classes $\A_\psi$ and $\BB_\psi$}
\setcounter{equation}{0}

We now focus on the case
\begin{equation}\label{psi2}
\psi(z)=\beta+\frac{\alpha z}{1-z}\,,
\end{equation}
where $\alpha, \beta \in\C$ with $\Re \frac{\beta}{\alpha} >0.$
Each such function $\psi$  maps the open unit disk $\D$ onto a half-plane.
The subordination relation $ \frac{F(z)}{z}\prec \psi(z)$ means
  \begin{equation*}\label{ineq}
 \A_{\psi}=\!\left\{ F : F(0)=F'(0)-\beta=0,\,  \Re \frac1\alpha\left(\frac{F(z)}{ z}-\beta\right) >-\frac 1 2\!\right\}\!.
  \end{equation*}

\begin{propo}\label{th_a_uxi}
   Let $F\in \A_{\psi}$, that is, $F(z)=z\left(\beta+\frac{\alpha\omega(z)}{1-\omega(z)}\right)$ with  $\omega(z)=\sum\limits_{n=1}^{\infty}c_nz^n\in\Omega.$ Then the Taylor coefficients of $F$ can be calculated by the formula
     \[
     a_p=\alpha\cdot \det\left(\begin{matrix}
                        c_1 & c_2 & c_3 & \ldots & c_{p-1} \\
                        -1 & c_1 & c_2 & \ldots & c_{p-2} \\
                        0&  -1& c_1 & \ldots & c_{p-3} \\
                       \vdots & {} & {} & {} & \vdots \\
                        0 & \ldots & 0 &  -1 & c_1
                      \end{matrix}\right),  \qquad p\ge2.
     \]
   \end{propo}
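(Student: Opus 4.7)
The plan is to extract a functional identity from the defining subordination, use it to derive a triangular linear recursion on the Taylor coefficients $a_p$ of $F$, and then recognize the determinant in the statement as the solution of that recursion.

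First I would rewrite the defining relation $F(z)=z\psi(\omega(z))=\beta z+\dfrac{\alpha z\,\omega(z)}{1-\omega(z)}$ in the equivalent polynomial form
\[
(F(z)-\beta z)\bigl(1-\omega(z)\bigr)=\alpha z\,\omega(z).
\]
Writing $F(z)=\beta z+\sum_{p\ge 2}a_p z^p$ and $\omega(z)=\sum_{n\ge 1}c_n z^n$ and comparing the coefficient of $z^p$ for $p\ge 2$ on both sides gives the lower triangular system
\[
a_p-\sum_{k=1}^{p-2}c_k\,a_{p-k}=\alpha\,c_{p-1},
\]
with the empty-sum convention producing the base case $a_2=\alpha c_1$.

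Next I denote the determinant in the statement by $D_{p-1}$, with the natural convention $D_0=1$, and verify that $(D_n)_{n\ge 0}$ satisfies exactly the same recursion
\[
D_n=\sum_{j=1}^{n} c_j\,D_{n-j}.
\]
This is established by expanding $D_n$ along the first row: the minor $N_j$ obtained by deleting row~$1$ and column~$j$ acquires the block form $\bigl(\begin{smallmatrix}A & B\\ 0 & C\end{smallmatrix}\bigr)$, where $A$ (rows $2,\ldots,j$, columns $1,\ldots,j-1$) is lower triangular with $-1$ on the diagonal, hence $\det A=(-1)^{j-1}$, while $C$ (rows $j+1,\ldots,n$, columns $j+1,\ldots,n$) is of exactly the same Hessenberg shape as $D_{n-j}$. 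The cofactor sign $(-1)^{1+j}$ cancels the sign $(-1)^{j-1}$ coming from $\det A$, leaving $c_jD_{n-j}$ as the $j$-th contribution. Since both $(a_p/\alpha)_{p\ge 2}$ and $(D_{p-1})_{p\ge 2}$ satisfy the same recursion with common first term $c_1$, a straightforward induction on $p$ yields $a_p=\alpha D_{p-1}$, which is the claimed formula.

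The main obstacle is the careful verification of the block decomposition of $N_j$: one must check that rows $k>j$ contribute only zeros to the first $j-1$ columns (the $-1$ on the subdiagonal of row~$k$ originally sits in column $k-1\ge j$, which is either deleted or past column $j-1$), and that the bottom-right block really is of the same Hessenberg form as the smaller determinant $D_{n-j}$. Once this bookkeeping is settled, the whole argument reduces to matching two first-order recursions, which is routine.
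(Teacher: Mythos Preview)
Your argument is correct and complete. One small slip: the block $A$ you describe (rows $2,\ldots,j$ and columns $1,\ldots,j-1$ of the original matrix) is \emph{upper} triangular, not lower triangular --- its first row is $(-1,c_1,c_2,\ldots,c_{j-2})$, its second $(0,-1,c_1,\ldots)$, and so on. This does not affect the value $\det A=(-1)^{j-1}$, so the conclusion stands.

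Your route differs from the paper's. The paper does not redo the determinant computation: it rewrites $\psi(z)=\beta-\alpha+\alpha/(1-z)$, so that $F(z)=(\beta-\alpha)z+\alpha\widetilde F(z)$ with $\widetilde F(z)=z/(1-\omega(z))$, and then simply invokes an earlier result (Theorem~4.1 of \cite{E-J-21a}) giving the determinant formula for the Taylor coefficients of $\widetilde F$. Thus the paper's proof is a one-line reduction plus a citation, while yours is a self-contained derivation: you extract the Toeplitz-type recursion $a_p=\alpha c_{p-1}+\sum_{k=1}^{p-2}c_k a_{p-k}$ directly from the functional identity and match it against the first-row cofactor expansion of the Hessenberg determinant. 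Your approach buys independence from the external reference and makes transparent why the Hessenberg structure appears; the paper's approach buys brevity by not repeating a computation already on record.
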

 \begin{proof}
 Write $ \psi $ in the form  $ \psi(z)=\beta-\alpha+\alpha\frac{1}{1-z}.$ Then each $F\in \A_{\psi}$ can be written as
\begin{equation*}\label{FbyFgal}
\frac{F(z)}{z}=\beta-\alpha+\alpha\frac{\widetilde{F}(z)}{z}\,, \quad  \text{ where }  \  \widetilde{F}(z)=\frac{z}{1-\omega(z)}\, .
\end{equation*}
Let $\widetilde{F}(z)=\sum\limits_{n=1}^{\infty}\widetilde{a}_nz^n$. Then $a_1=\beta-\alpha$ and $a_{p}=\alpha \widetilde{a}_p, \  p\geq2.$ Reminding that $\widetilde{a}_p$ can be calculated by Theorem 4.1 in \cite{E-J-21a}, we complete the proof.
\end{proof}

Let now turn to the class $\BB_\psi$.
\begin{examp}\label{ex-G}
Let $F \in \A_\psi$ with $\omega \in \Omega$ defined by $\omega(z)=e^{i \theta}z$, $\theta \in \R$. Then $G=F^{-1} \in \BB_\psi$ satisfies
\[
(\beta-\alpha) e^{i\theta} G^2(z)-(\beta+z e^{i\theta})G(z)+z \equiv0.
\]
Solve this equation taking in mind that  $G(0)=0$. Then we get
\begin{equation}\label{G-e-itheta}
G(z)=\frac{2z}{\beta+z e^{i\theta}+\sqrt{(\beta-z e^{i\theta})^2+4\alpha e^{i \theta}z}}\,.
\end{equation}
\end{examp}

\medspace

The following auxiliary assertion will be used to estimate the early coefficients and the Fekete--Szeg\"{o} functionals over the classes $\A_\psi$ and $\BB_\psi$.

\begin{lemma}\label{lem-omega}
Let $a,b \in \C$. Then for any  $\omega(z)=\sum\limits_{n=1}^{\infty}c_nz^n\in\Omega$  the following estimate holds:
\begin{equation}\label{estim12}
|a c_1^2+ b c_2| \leq \max\{|a|,|b|\}.
\end{equation}
This inequality is sharp. Specifically,
\begin{itemize}
  \item [ ]if $|a|>|b|$, equality in~\eqref{estim12} holds only for $\omega(z)=e^{i \theta }z$, $\theta \in \R$;
  \item [ ]if $|a|<|b|$, equality in~\eqref{estim12} holds only for $\omega(z)=e^{i \theta}z^2$, $\theta \in \R$;
  \item [ ]if $|a|=|b|$, equality in~\eqref{estim12} holds only for $\omega(z)=z\frac{\rho+e^{i \theta} z}{1+\overline{\rho}e^{i \theta} z}$, where $\rho \in \overline{\D}$ and $\theta \in \R.$
\end{itemize}
\end{lemma}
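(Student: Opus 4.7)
The plan is to combine the triangle inequality with two classical Schwarz-type coefficient bounds for $\omega\in\Omega$, namely $|c_1|\le 1$ and $|c_2|\le 1-|c_1|^2$, and then optimise over the single real parameter $t=|c_1|^2\in[0,1]$.

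First I would apply the triangle inequality and the two bounds to write
\[
|ac_1^2+bc_2|\le |a|\,|c_1|^2+|b|\,|c_2|\le |a|\,t+|b|(1-t)=|b|+(|a|-|b|)\,t.
\]
The bound $|c_1|\le 1$ is the classical Schwarz lemma, while $|c_2|\le 1-|c_1|^2$ follows by applying Schwarz--Pick to the self-map $g(z):=\omega(z)/z$ of $\overline{\D}$ and reading off the derivative at the origin. Since $|b|+(|a|-|b|)t$ is affine in $t$, its maximum over $t\in[0,1]$ equals $\max\{|a|,|b|\}$, which proves \eqref{estim12}.

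For the equality discussion I would examine the three cases separately. When $|a|>|b|$, the linear function $|b|+(|a|-|b|)t$ attains its maximum only at $t=1$, forcing $|c_1|=1$, and the equality case of Schwarz's lemma then yields $\omega(z)=e^{i\theta}z$. When $|a|<|b|$, the maximum is only attained at $t=0$, so $c_1=0$ and we need $|c_2|=1$; applying Schwarz once more to the function $\omega(z)/z\in\Omega$ (which now vanishes at $0$) gives $\omega(z)=e^{i\theta}z^2$. When $|a|=|b|$, both inequalities in the display must be simultaneously tight: the triangle inequality tightness aligns the arguments of $ac_1^2$ and $bc_2$, while $|c_2|=1-|c_1|^2$ is the extremal case of the second-coefficient estimate, from which one recovers
\[
\omega(z)=z\cdot\frac{\rho+e^{i\theta}z}{1+\overline{\rho}e^{i\theta}z},\qquad \rho=c_1\in\overline{\D},\ \theta\in\R.
\]

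The step that deserves the most care, and the one I expect to be the principal obstacle, is the characterisation of the equality case $|c_2|=1-|c_1|^2$. To handle it I would introduce the auxiliary function
\[
h(z):=\frac{1}{z}\cdot\frac{g(z)-c_1}{1-\overline{c_1}\,g(z)},
\]
verify that it is holomorphic on $\D$ and bounded by $1$ there (by Schwarz--Pick applied to $g$), compute $h(0)=c_2/(1-|c_1|^2)$, and then invoke the maximum principle: $|h(0)|=1$ forces $h\equiv e^{i\theta}$ on $\D$. Solving the resulting equation $g(z)-c_1=e^{i\theta}z(1-\overline{c_1}\,g(z))$ for $g$ and multiplying by $z$ produces the Möbius-type form of $\omega$ stated above, which also absorbs the first two cases as the boundary specialisations $|\rho|=1$ and $\rho=0$.
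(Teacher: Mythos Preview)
Your proof is correct and follows essentially the same route as the paper's: apply the triangle inequality together with the Schwarz bounds $|c_1|\le 1$ and $|c_2|\le 1-|c_1|^2$, observe that the resulting affine function of $t=|c_1|^2$ is maximised at an endpoint, and then analyse the equality cases one by one. You supply more detail than the paper on the characterisation of $|c_2|=1-|c_1|^2$ via the auxiliary Schwarz--Pick function $h$; the paper simply asserts that this equality forces $\omega$ to be $z$ times a disk automorphism and records the argument alignment condition.
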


\begin{proof}
By the Schwarz Lemma $|c_1| \leq 1$. Also $|c_2| \le 1-|c_1|^2$ (see, for example, \cite{Dur}). Thus,
\begin{equation}\label{estim-1-2}
|a c_1^2+ b c_2| \leq |a|\cdot | c_1|^2+ |b|\cdot(1- | c_1|^2) \leq \max\{|a|,|b|\}.
\end{equation}
To find extremal functions, let separate the following cases.

If $|a|>|b|$, both signs in~\eqref{estim-1-2} are equalities only when $|c_1|=1$ and $c_2=0$. By the Schwarz Lemma $w(z)=e^{i \theta }z$, $\theta \in \R$, is the only extremal function in this case.

If $|a|<|b|$, both signs in~\eqref{estim-1-2} are equalities only when  $c_1=0$ and $|c_2|=1$. Then $\omega(z)=e^{i \theta}z^2$ with some $\theta \in \R$ is the only extremal function in this case.

Suppose $|a|=|b|$. The second sign in~\eqref{estim-1-2} is equality only when $\omega$ is the product of the identity mapping $\Id(z)=z$ with an automorphism~of~the~disk, that is, $c_2=(1-|c_1|^2)e^{-i\theta}$, $\theta \in \R$. The first sign in~\eqref{estim-1-2} becomes equality only if $\arg b-\theta=\arg a+2\arg c_1$. The proof is complete.
\end{proof}

Now we are ready to present sharp estimates for the early coefficients over the class $\BB_\psi$ as well as extremal functions. Assume that $F \in \A_\psi$ and
\begin{equation}\label{expanF-G}
G(z)=F^{-1}(z)=\sum_{k=1}^{\infty}b_kz^k.
\end{equation}

\begin{theorem}\label{th-from-ex}
 Let $F \in \A_\psi$ and $G=F^{-1}\in\BB_\psi$ have Taylor expansion \eqref{expanF-G}. Then $b_1=\frac{1}{\beta}$ and
\begin{itemize}
  \item [(1)]   $\left|b_2 \right| \leq \frac{|\alpha|}{|\beta|^3}$ with equality only for $F(z)=z\psi( e^{i \theta }z)$, $\theta \in \R$. In this case $G=F^{-1}$ is defined by \eqref{G-e-itheta}.

  \item [(2)] $\left|b_3\right| \leq \frac{|\alpha|}{|\beta|^5} \max \{|\beta|,|2\alpha-\beta| \}.$
Moreover,
\begin{itemize}
 \item [(i)] if $\Re \frac{\beta}{\alpha}<1$, then equality holds only for $F(z)=z\psi ( e^{i \theta }z)$  with some $\theta \in \R$;
  \item [(ii)] if $\Re \frac{\beta}{\alpha}>1$, then  equality holds only for $F(z)=z\psi(e^{i \theta}z^2)$ with some~$\theta \in \R$;
  \item [(iii)] if $\Re \frac{\beta}{\alpha}=1$, then equality holds only for $F(z)=z\psi\left(z\frac{\rho+e^{i \theta} z}{1+\overline{\rho}e^{i \theta} z}\right)$ with some $\rho \in \overline{\D}$ and $\theta \in \R.$
\end{itemize}
\end{itemize}
\end{theorem}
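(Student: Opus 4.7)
The plan is to compute $b_1,b_2,b_3$ explicitly in terms of the Schwarz function coefficients $c_1,c_2$, and then reduce each estimate to an application of either the Schwarz lemma or Lemma~\ref{lem-omega}.

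First I would recall the standard power series inversion formulas: if $F(z)=a_1z+a_2z^2+a_3z^3+\cdots$ with $a_1\neq 0$, then its compositional inverse $G=F^{-1}$ satisfies
\[
b_1=\frac{1}{a_1},\qquad b_2=-\frac{a_2}{a_1^3},\qquad b_3=\frac{2a_2^2-a_1a_3}{a_1^5}.
\]
From the expansion $F(z)=z\psi(\omega(z))=\beta z+\alpha c_1 z^2+\alpha(c_1^2+c_2)z^3+\cdots$, which follows from Proposition~\ref{th_a_uxi} (the determinant for $p=2$ is $c_1$, and for $p=3$ it is $c_1^2+c_2$), one reads off $a_1=\beta$, $a_2=\alpha c_1$, $a_3=\alpha(c_1^2+c_2)$. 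Substituting back gives
\[
b_1=\frac{1}{\beta},\qquad b_2=-\frac{\alpha c_1}{\beta^3},\qquad b_3=\frac{\alpha\bigl((2\alpha-\beta)c_1^2-\beta c_2\bigr)}{\beta^5}.
\]

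For part (1), the bound $|b_2|\leq|\alpha|/|\beta|^3$ is immediate from the Schwarz lemma estimate $|c_1|\leq 1$, with equality precisely when $|c_1|=1$, forcing $\omega(z)=e^{i\theta}z$ by the rigidity of Schwarz. The corresponding extremal $F(z)=z\psi(e^{i\theta}z)$ produces, via the computation in Example~\ref{ex-G}, the closed form \eqref{G-e-itheta} for $G$.

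For part (2), the expression for $b_3$ fits Lemma~\ref{lem-omega} exactly with $a=2\alpha-\beta$ and $b=-\beta$, giving
\[
|b_3|\leq\frac{|\alpha|}{|\beta|^5}\max\{|2\alpha-\beta|,|\beta|\}.
\]
The remaining step is the translation between the hypothesis in Lemma~\ref{lem-omega} and the geometric condition on $\Re(\beta/\alpha)$. A short calculation gives
\[
|2\alpha-\beta|^2-|\beta|^2=4|\alpha|^2-4\Re(\alpha\overline{\beta})=4|\alpha|^2\left(1-\Re\frac{\beta}{\alpha}\right),
\]
so $|2\alpha-\beta|>|\beta|$, $<|\beta|$, or $=|\beta|$ exactly according to whether $\Re(\beta/\alpha)$ is $<1$, $>1$, or $=1$. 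Reading off the three extremal $\omega$'s from Lemma~\ref{lem-omega} and substituting into $F(z)=z\psi(\omega(z))$ yields the three extremal forms listed in cases (i)--(iii).

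The computations are essentially routine; the only subtle point is making sure the rigidity statements from Lemma~\ref{lem-omega} transfer cleanly to rigidity for $F$ (and hence for $G$). Since the map $\omega\mapsto z\psi(\omega(z))$ is injective on $\Omega$ (both $\psi$ and the multiplication by $z$ are), the extremal $F$ is uniquely determined by the extremal $\omega$, so no additional work is needed there.
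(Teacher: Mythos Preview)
Your proof is correct and follows essentially the same route as the paper: compute $a_1,a_2,a_3$ via Proposition~\ref{th_a_uxi}, invert to get $b_1,b_2,b_3$, and apply the Schwarz lemma for part~(1) and Lemma~\ref{lem-omega} with $a=2\alpha-\beta$, $b=-\beta$ for part~(2). Your version is in fact slightly more explicit than the paper's, since you spell out the identity $|2\alpha-\beta|^2-|\beta|^2=4|\alpha|^2\bigl(1-\Re(\beta/\alpha)\bigr)$ that links the cases of Lemma~\ref{lem-omega} to the trichotomy on $\Re(\beta/\alpha)$, and you justify why the extremal $\omega$ determines the extremal $F$ uniquely.
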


\begin{proof}
 Formula \eqref{psi2} implies $\psi(z)= \beta + \alpha\sum\limits_{n=1}^{\infty}z^n $. By \eqref{subord} we have $a_1=F'(0)=\beta$, hence $b_1=\displaystyle \frac{1}{\beta}$ by \eqref{expanF-G}.

Further, Proposition~\ref{th_a_uxi} gives $a_2=\alpha c_1$. Using the fact that $F\circ G=\Id$ we get  ${F''(0)G'(0)^2+F'(0)G''(0)=0}$. Then
\begin{equation}\label{b2}
b_2=-\frac{\alpha c_1}{\beta^3}.
\end{equation}
By the Schwarz Lemma $|c_1| \leq 1$ with equality only for rotations $ \omega(z)=e^{i \theta} z$. The explicit formula for $G$ was found in Example~\ref{ex-G}. Thus assertion (1) is proven.

Similarly, one sees that $a_3=\alpha (c_1^2+c_2)$ and $b_3=\frac{1}{\beta^5}(2a_2^2- a_3\beta)$, that is,
\begin{equation}\label{b3}
b_3=\frac{\alpha}{\beta^5}((2\alpha -\beta) c_1^2- \beta c_2).
\end{equation}
Denote $a:=2\alpha -\beta$ and $b:=-\beta$. Assertion (2) holds by Lemma~\ref{lem-omega}.
\end{proof}

\begin{remar}\label{rem-coeff-connec}
It follows from the proof of Theorem~\ref{th-from-ex} that ${|a_2|\leq |\alpha|}$ with equality only for $F(z)=z\psi(e^{i \theta }z)$, $\theta \in \R$, and ${|a_3|\leq 2|\alpha|}$ with equality only for  $\omega(z)=z\frac{\rho+e^{i \theta} z}{1+\overline{\rho}e^{i \theta} z}$ for some $\rho \in \overline{\D}$ and $\theta \in \R.$
Moreover, equalities $|a_2|=|\alpha|$ and $|b_2|=\frac{|\alpha| }{|\beta|^3}$ are equivalent and imply $|b_3|=\frac{|\alpha||2\alpha -\beta|}{|\beta|^5}\,$.
\end{remar}

To proceed we note that Proposition~\ref{th_a_uxi} enables to obtain estimates of the Fekete--Szeg\"{o} functional  $\Phi(\cdot, \lambda)$ defined by \eqref{FS_funcl} over the class~$\A_\psi$ by a straightforward calculation.
We provide estimates on $\Phi(\cdot, \lambda)$ over both classes $\A_\psi$ and~$\BB_\psi$, as well as describe extremal functions.

\begin{theorem}\label{tm-F-S-estim}
 Let $F\in\A_\psi$ and $G=F^{-1}\in\BB_\psi$. Then

$$\left|\Phi(G,\lambda)\right|\le \frac{|\alpha|}{|\beta|^6} \max (|\beta|, |\beta-(2-\lambda)\alpha|).$$ Moreover,
\begin{itemize}
  \item [(i)] if $|\beta-(2-\lambda) \alpha|>|\beta|$, then equality holds only for $F(z)=z\psi( e^{i \theta }z)$, $\theta \in \R$;
  \item [(ii)]  if $|\beta-(2-\lambda) \alpha|<|\beta|$, then equality holds only for $F(z)=z\psi(e^{i \theta}z^2)$, ${\theta \in \R}$;
  \item [(iii)] if $|\beta-(2-\lambda) \alpha|=|\beta|$, then equality holds only for $F(z)=z\psi\left(z\frac{\rho+e^{i \theta} z}{1+\overline{\rho}e^{i \theta} z}\right)$ with $\rho \in \overline{\D}$ and $\theta \in \R.$
\end{itemize}
\end{theorem}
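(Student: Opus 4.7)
The plan is to express $\Phi(G,\lambda)=b_1b_3-\lambda b_2^2$ explicitly in terms of the Schwarz coefficients $c_1,c_2$ and then apply Lemma~\ref{lem-omega} directly. This should be immediate once the bookkeeping is done, because the structure of the bound in Lemma~\ref{lem-omega} exactly matches the shape of what we will obtain.

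First I would recall from the proof of Theorem~\ref{th-from-ex} the explicit formulas $b_1=\frac1\beta$, $b_2=-\frac{\alpha c_1}{\beta^3}$ (equation~\eqref{b2}), and $b_3=\frac{\alpha}{\beta^5}\bigl((2\alpha-\beta)c_1^2-\beta c_2\bigr)$ (equation~\eqref{b3}). Substituting these into the definition~\eqref{FS_funcl} of the Fekete--Szeg\"{o} functional gives
\[
\Phi(G,\lambda)=\frac{1}{\beta}\cdot\frac{\alpha}{\beta^5}\bigl((2\alpha-\beta)c_1^2-\beta c_2\bigr)-\lambda\cdot\frac{\alpha^2 c_1^2}{\beta^6}=\frac{\alpha}{\beta^6}\bigl[\bigl((2-\lambda)\alpha-\beta\bigr)c_1^2-\beta c_2\bigr].
\]
Thus, setting $a:=(2-\lambda)\alpha-\beta$ and $b:=-\beta$, we have $\Phi(G,\lambda)=\frac{\alpha}{\beta^6}(ac_1^2+bc_2)$, and the bound of the theorem is $\frac{|\alpha|}{|\beta|^6}\max\{|a|,|b|\}$ with $|a|=|\beta-(2-\lambda)\alpha|$ and $|b|=|\beta|$.

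Applying Lemma~\ref{lem-omega} with these values of $a,b$ yields $|ac_1^2+bc_2|\le\max\{|a|,|b|\}$, from which the claimed inequality follows. The description of equality cases is then read off directly from Lemma~\ref{lem-omega}: in the case $|\beta-(2-\lambda)\alpha|>|\beta|$ (i.e.\ $|a|>|b|$) only $\omega(z)=e^{i\theta}z$ is extremal, corresponding to $F(z)=z\psi(e^{i\theta}z)$; in the case $|\beta-(2-\lambda)\alpha|<|\beta|$ only $\omega(z)=e^{i\theta}z^2$ works, giving $F(z)=z\psi(e^{i\theta}z^2)$; and in the equality case $|\beta-(2-\lambda)\alpha|=|\beta|$ the extremal $\omega$ is a disk automorphism times $\Id$ as specified.

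Since the computation of $b_1,b_2,b_3$ is already completed in the proof of Theorem~\ref{th-from-ex} and the extremal analysis is fully carried out in Lemma~\ref{lem-omega}, there is no substantive obstacle here; the only delicate point is to keep the sign and the factor $(2-\lambda)$ correct when defining $a$, so that the max on the right-hand side is indexed by $|\beta-(2-\lambda)\alpha|$ and not by $|(2-\lambda)\alpha-\beta|$ with an unnoticed sign issue (of course these are equal, but the matching with the statement must be visibly clean).
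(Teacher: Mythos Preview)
Your proposal is correct and follows essentially the same approach as the paper's own proof: compute $\Phi(G,\lambda)$ from the formulas \eqref{b2}--\eqref{b3}, factor out $\alpha/\beta^6$, and apply Lemma~\ref{lem-omega}. The only cosmetic difference is that the paper takes $a:=\beta-(2-\lambda)\alpha$ and $b:=\beta$ (the negatives of your choices), which of course yields the same $|a|$, $|b|$ and the same conclusion.
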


\begin{proof}
Using formulas \eqref{b2} and \eqref{b3}, we calculate
$$\left|\Phi(G,\lambda)\right|=\frac{|\alpha|}{|\beta|^6}\left|(\beta-(2-\lambda)\alpha)c_1^2+\beta c_2\right|.$$
Denote $a:=\beta-(2-\lambda) \alpha $ and $b:=\beta$. Then the result follows from Lemma~\ref{lem-omega}.
\end{proof}

Note in passing that using early coefficients of $F$ that were found in the proof of Theorem~\ref{th-from-ex}, one sees
$$\left|\Phi(F,\lambda)\right|=|\alpha|\left|(\beta-\lambda\alpha)c_1^2+\beta\right|=|\beta|^6|\Phi(G,2-\lambda)|.$$
Therefore, in fact, Theorem~\ref{tm-F-S-estim}  provides also the sharp estimates on $\left|\Phi(\cdot,\lambda)\right|$ over $\A_\psi$, as well as the extremal functions.

To be more concrete, let $\Re\beta>0 $ and $\alpha=2\Re \beta$. Then the function $\psi$ maps the open unit disk onto the right half-plane. So, the last displayed formula can be transformed to estimates over the Noshiro--Warschawski class. In fact, this estimate
 coincides with the particular case $m=n=2$ of Theorem~3.2 in \cite{E-V}.

\begin{examp}\label{ex-starlike}
  Consider the set of all functions $F\in\Hol(\D,\C)$ such that $F(0)=F'(0)-1=0$ and $\Re\frac{F(z)}{z}\ge\frac12\,.$ This is equivalent to  $F\in \A_\psi$ with $\psi(z)=\frac1{1-z}$, that is, to the choice  $\alpha=\beta=1\,.$ It is well-known that this class contains the class $S^*(\frac12)$ of starlike functions of  order $\frac12$  which, in turn, contains the class $\mathcal{C}$ of  convex functions. Clearly, $\A_\psi$ is much wider since its elements are not necessarily univalent functions. Combination of Theorem~\ref{tm-F-S-estim} with results proven in \cite{Ke-Me} leads to
  \[
  \left| \Phi(F,\lambda)\right|\le \left\{ \begin{array}{ll}
                                               \max(\frac13,|1-\lambda|) & \mbox{ for }\ F\in\mathcal{C},\vspace{1mm} \\
                                               \max(\frac12,|1-\lambda|) &  \mbox{ for }\  F\in S^*(\frac12), \vspace{1mm}\\
                                               \max(1 ,|1-\lambda|) &  \mbox{ for }\ F\in \A_{\psi}.
                                             \end{array}                       \right.
  \]
\end{examp}

\medskip

\section{Estimates for nonlinear resolvents}\label{sect-rigidity}
\setcounter{equation}{0}

In this section we rely on previous results to establish estimates on the Taylor coefficients and the Fekete--Szeg\"{o}  functional over the class of nonlinear resolvents and to show that these estimates are sharp.

It turns out that for specific choices of $\psi$, the classes $\BB_\psi$ consist of non-linear resolvents. To make this clear, fix $q\in\C$ with $\Re q>0$. From now on we focus on the particular case $\alpha=2r\Re q$ and $\beta=1+rq$, that is,
\begin{equation*}\label{psi-r}
\psi_r(z)=1+r\,\frac{q+\overline{q}z}{1-z}=1+rq+2r\Re q\sum_{n=1}^\infty z^n.
\end{equation*}
Notice that in this case $\Phi_n(\psi_r,\lambda) =(2r\Re q)^2(1-\lambda)$ for $n\ge1$.

The following criteria  for a holomorphic function $F$ to belong to the class~$\A_{\psi_r}$  follows directly from our notations.
\begin{lemma}\label{lemmFinAr}
  Let $F\in\Hol(\D,\C),\ F(0)=0.$ The following conditions are equivalent:
  \begin{itemize}
  \item [(i)] $\Re \frac{F(z)}{z}\ge1$ for all $z\in\D$ and $F'(0)=1+rq$;
  \item [(ii)] $F(z)=z+rz\frac{q+\overline{q}\omega(z)}{1-\omega(z)}$ for some $\omega\in\Omega$;
  \item [(iii)] $F\in \A_{\psi_r}$.
\end{itemize}
\end{lemma}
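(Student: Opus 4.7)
The plan is to prove (ii)\,$\Leftrightarrow$\,(iii) by reading off the definition of $\A_{\psi_r}$, and to prove (i)\,$\Leftrightarrow$\,(iii) by identifying the image $\psi_r(\D)$ explicitly.

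First I would dispense with (ii)\,$\Leftrightarrow$\,(iii): by definition, $F\in\A_{\psi_r}$ means $F(z)/z\prec\psi_r(z)$. Since $\psi_r$ is a M\"obius transformation and hence univalent on $\D$, this subordination is equivalent to the existence of $\omega\in\Omega$ with $F(z)/z=\psi_r(\omega(z))$, i.e.
\[
F(z)=z\psi_r(\omega(z))=z+rz\,\frac{q+\overline{q}\,\omega(z)}{1-\omega(z)},
\]
which is precisely (ii). (This is the same reformulation already used in \eqref{subord}.)

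Next, for the equivalence (i)\,$\Leftrightarrow$\,(iii), the key step is to compute the image of $\psi_r$. A direct calculation, multiplying numerator and denominator of $\frac{q+\overline{q}z}{1-z}$ by $1-\overline{z}$, gives
\[
\Re\,\frac{q+\overline{q}z}{1-z}=\frac{(\Re q)(1-|z|^2)}{|1-z|^2}\qquad(z\in\D),
\]
so that $\Re\psi_r(z)=1+r\,\frac{(\Re q)(1-|z|^2)}{|1-z|^2}>1$ for every $z\in\D$, with $\Re\psi_r(z)\to 1$ as $|z|\to1^-$ (away from $z=1$). Since $\psi_r$ is a M\"obius transformation with a single pole at $z=1$, it follows that $\psi_r$ maps $\D$ conformally onto the open half-plane $H:=\{w\in\C:\Re w>1\}$, with $\psi_r(0)=1+rq$.

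Using this identification, (iii) $\Rightarrow$ (i) is straightforward: if $F\in\A_{\psi_r}$ then $F(z)/z=\psi_r(\omega(z))$ for some $\omega\in\Omega$, so $F'(0)=F(z)/z\big|_{z=0}=\psi_r(0)=1+rq$ and $\Re(F(z)/z)>1$ throughout $\D$. Conversely, assuming (i), the function $h(z):=F(z)/z$ extends holomorphically to $\D$ with $h(0)=F'(0)=1+rq=\psi_r(0)$ and $\Re h(z)\ge 1$; if $h$ is constant then $h\equiv 1+rq=\psi_r(0)$ and $h=\psi_r\circ 0$, while if $h$ is non-constant then by the open mapping principle $h(\D)\subset H=\psi_r(\D)$, and setting $\omega:=\psi_r^{-1}\circ h$ yields $\omega\in\Hol(\D,\D)$ with $\omega(0)=0$, so $\omega\in\Omega$ and $h=\psi_r\circ\omega$, i.e.\ $F\in\A_{\psi_r}$.

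There is no real obstacle here; the only point that needs a moment's care is that the weak inequality $\Re h\ge1$ in (i) is upgraded to the strict one via the maximum principle, so that the subordination machinery applies. Once that is noted, the three equivalences follow quickly from the image computation for $\psi_r$.
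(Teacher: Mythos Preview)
Your proof is correct. The paper does not give a detailed argument for this lemma at all; it simply states that the equivalences ``follow directly from our notations,'' relying on the characterization of $\A_\psi$ given earlier in Section~2 (namely $\A_\psi=\{F: F(0)=F'(0)-\beta=0,\ \Re\frac1\alpha(\frac{F(z)}{z}-\beta)>-\frac12\}$) specialized to $\alpha=2r\Re q$, $\beta=1+rq$. Your write-up is precisely a fleshed-out version of that implicit reasoning: you identify $\psi_r(\D)$ as the half-plane $\{\Re w>1\}$, read off (ii)\,$\Leftrightarrow$\,(iii) from the definition of subordination, and then run the subordination argument for (i)\,$\Leftrightarrow$\,(iii). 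The one genuine detail you add beyond the paper is the observation that the non-strict inequality $\Re(F(z)/z)\ge1$ in (i) upgrades to a strict one via the minimum principle (using that $F'(0)=1+rq$ already lies strictly inside the half-plane), which is needed to make the subordination go through; this is a point the paper glosses over.
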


Each  condition of this lemma is equivalent to the fact that the function $f$ defined by
\begin{equation} \label{G*-1}
 f(z)=\frac{F(z)-z}r=z\frac{q+\overline{q}\omega(z)}{1-\omega(z)}\,,\  \omega\in\Omega,
\end{equation}
is a semi-complete vector field on $\D$ by the Berkson--Porta formula. Hence the (right) inverse function $F^{-1}=:G(=G_r)$, which, in fact, solves the functional equation
\begin{equation*} \label{G*-2}
G_r+ rf\circ G_r=\Id
\end{equation*}
is holomorphic in the open unit disk $\D$ by Theorem~\ref{teorA}. (Recall that $G_r$ is a univalent self-mapping of $\D$, which is called   the resolvent of $f$, see Section~\ref{sect-intro}.)

Let construct several semi-complete vector fields by formula~\eqref{G*-1}. These examples will be useful  in the sequel.
\begin{examp}\label{ex-gen-calc}
Let $\omega\in\Omega$ and $f(z)=z\frac{q+\overline{q}\omega(z)}{1-\omega(z)}$.
\begin{itemize}
 \item [(i)] Choosing $\omega(z)=e^{i \theta }z$, $\theta \in \R$, we get $f_1(z)=\displaystyle z\frac{q+\overline{q}ze^{i \theta}}{1-ze^{i \theta}}\,.$
In this case
   \begin{equation}\label{Gr-rReg-gt1}
G_r(z)=\frac{2z}{ze^{i\theta}+rq+1+ \sqrt{(ze^{i\theta}-1-rq)^2+8r ze^{i\theta}\Re q}}\,.
   \end{equation}
Indeed, the form of $G_r$ in \eqref{Gr-rReg-gt1} can be obtain from Example~\ref{ex-G}.
  \item [(ii)] Choosing $\omega(z)=e^{i \theta}z^2$, $\theta \in \R,$ we get $ f_2(z)=\displaystyle z\frac{q+\overline{q} e^{i \theta}z^2 }{1-e^{i \theta}z^2 }\,$.
 \item [(iii)] Choosing $\omega(z)=z\frac{\rho+e^{i \theta} z}{1+\overline{\rho}e^{i \theta} z}$ with $\rho \in \overline{\D}$ and $\theta \in \R$, we get
\[
f_3(z)=\displaystyle z \frac{q+\left(\overline{q}\rho+\overline{\rho}qe^{i \theta}\right)z
+\overline{q}e^{i \theta}z^2}{1+(\overline{\rho} e^{i\theta} -\rho)z- e^{i \theta}z^2}\,.
\]
If $|\rho|=1$, then $f_3$ coincides with $f_1$,   while if $\rho=0$ then $f_3\equiv f_2.$
\end{itemize}
\vspace{1mm}
Throughout this section, we will write the functions $f_1, f_2, f_3$ without explicitly indicating their dependence on the parameters $\theta$ and $\rho$, although this dependence is implied.
Note that $f_1$, $f_2$ and $f_3$ can be obtained from extremal functions described in  Theorem~\ref{tm-F-S-estim} by the formula $f(z)=\frac{F(z)-z}r$~(see~\eqref{G*-1}).
 \end{examp}
We denote the by $\JJ_r$ set of nonlinear resolvents $G_r$ for all semi-complete vector fields $f$ normalized by $f(0)=0$ and $f'(0)=q$ (thus $\JJ_r=\BB_{\psi_r}$).

Let a nonlinear resolvent $G_r\in\JJ_r$ have the Taylor expansion $$G_r(z)=\displaystyle\frac z{1+rq}+\sum_{n=2}^\infty b_nz^n.$$

 Theorems~\ref{th-from-ex}--\ref{tm-F-S-estim} imply estimates on the Taylor coefficients and the Fekete--Szeg\"{o} functional over $\JJ_r.$ Indeed, let substitute $\beta=1+rq$ and $\alpha=2r\Re q$ into the inequalities in that theorems. Then using the notations
 \begin{eqnarray}\label{u-v}
\left.
 \begin{array}{lll}
 % \nonumber % Remove numbering (before each equation)
 u_q(r)&:=&\displaystyle\frac{2r\Re q}{|1+rq|^5} \qquad \text{ and } \\
v_q(r)&:=&u_q(r)\cdot w_q(r)\quad\text{ with }\vspace{1mm}\\
 w_q(r)&:=& \displaystyle \frac{\left|1+rq-2(2-\lambda) r\Re q\right|}{|1+rq|}
\end{array}
\right\}
 \end{eqnarray}
one gets:
\begin{propo}\label{coro-resG}
     For every $G \in\JJ_r$ the following estimates hold:
\begin{eqnarray}
&& \left|b_2 \right| \leq u_q(r)\cdot |1+rq|^2, \label{b2est}\quad\\
&&\left|b_3 \right|\leq  u_q(r)\cdot\max \left( |1+rq|,\left|1+rq-4r \Re q\right|\right)\!,\label{b3est} \\
   &&  \left|\Phi(G,\lambda) \right| \le  \max\left(u_q(r) , v_q(r) \right).  \label{Phi-1}
\end{eqnarray}
  \end{propo}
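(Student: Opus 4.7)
The plan is to recognize that this proposition is a direct specialization of the general estimates in Section~2 to the particular choice of subordinating function $\psi_r$. By Lemma~\ref{lemmFinAr}, we have $\JJ_r=\BB_{\psi_r}$, where $\psi_r$ fits the template of \eqref{psi2} with the parameter identification
\[
\beta=1+rq,\qquad \alpha=2r\Re q.
\]
One should first verify the admissibility condition $\Re(\beta/\alpha)>0$: this holds since $\beta/\alpha=(1+rq)/(2r\Re q)$ has real part $(1+r\Re q)/(2r\Re q)>0$ for $r>0$ and $\Re q>0$. Thus every $G\in\JJ_r$ falls under the hypotheses of Theorems~\ref{th-from-ex} and \ref{tm-F-S-estim}.

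For the coefficient bounds, I would simply insert the values of $\alpha,\beta$ into the estimates of Theorem~\ref{th-from-ex}. Assertion~(1) yields
\[
|b_2|\le\frac{|\alpha|}{|\beta|^3}=\frac{2r\Re q}{|1+rq|^3}=u_q(r)\cdot|1+rq|^2,
\]
which is exactly \eqref{b2est}. For \eqref{b3est}, assertion~(2) gives $|b_3|\le\frac{|\alpha|}{|\beta|^5}\max\{|\beta|,|2\alpha-\beta|\}$; here $\frac{|\alpha|}{|\beta|^5}=u_q(r)$ and
\[
2\alpha-\beta=4r\Re q-(1+rq)=-\bigl(1+rq-4r\Re q\bigr),
\]
so $|2\alpha-\beta|=|1+rq-4r\Re q|$ and \eqref{b3est} follows.

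For the Fekete--Szeg\"o estimate \eqref{Phi-1}, I would apply Theorem~\ref{tm-F-S-estim} with the same substitution. The prefactor becomes $\frac{|\alpha|}{|\beta|^6}$, and multiplying by $|\beta|$ and by $|\beta-(2-\lambda)\alpha|$ respectively yields
\[
\frac{|\alpha|}{|\beta|^5}=u_q(r),\qquad \frac{|\alpha|}{|\beta|^6}\cdot|1+rq-2(2-\lambda)r\Re q|=u_q(r)\cdot w_q(r)=v_q(r),
\]
in the notation \eqref{u-v}. Hence $|\Phi(G,\lambda)|\le\max(u_q(r),v_q(r))$, as claimed.

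The proof is therefore nothing more than a bookkeeping exercise: there is no genuine obstacle, only the need to carry out the substitution carefully and to recognize each piece of the resulting expression in the abbreviations $u_q(r)$, $v_q(r)$, $w_q(r)$. I would keep the write-up extremely short, referring explicitly to Lemma~\ref{lemmFinAr} for the identification $\JJ_r=\BB_{\psi_r}$ and then to Theorems~\ref{th-from-ex} and \ref{tm-F-S-estim} for the three inequalities; the sharpness discussion and the description of extremal resolvents will be addressed separately in the theorems that follow, using the explicit semi-complete vector fields $f_1,f_2,f_3$ built in Example~\ref{ex-gen-calc}.
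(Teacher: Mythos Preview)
Your proposal is correct and follows exactly the paper's approach: the paper does not even give a formal proof for this proposition, merely stating that one substitutes $\beta=1+rq$ and $\alpha=2r\Re q$ into Theorems~\ref{th-from-ex}--\ref{tm-F-S-estim} and rewrites the results using the abbreviations $u_q(r)$, $v_q(r)$, $w_q(r)$. Your write-up simply makes this substitution explicit (and even adds the verification of $\Re(\beta/\alpha)>0$, which the paper omits).
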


Comparison of the estimates on $b_2$ and $b_3$ in this proposition with Theorem~\ref{th-from-ex} allows to get the description of functions maximizing $|b_2|$ and $|b_3|$ over $\JJ_r$. To this end we use semi-complete vector fields $f_1$, $f_2$ and $f_3$ introduced in Example~\ref{ex-gen-calc}.

\begin{theorem}\label{propo-estim-r}
Estimates \eqref{b2est} and \eqref{b3est} are sharp. Moreover, let $G\in\JJ_r$ be the resolvent of $f$, then
 \begin{itemize}
   \item [(i)] if $f=f_1$, then
             equality in  \eqref{b2est} holds for all $r>0$ and equality in  \eqref{b3est} holds  whenever $\, r\!\Re q>1$;
 \item [(ii)]  if either $f=f_2$ and $r\Re q< 1$, or $f=f_3$ and $r\Re q=1$, then inequality \eqref{b2est} is strong, while \eqref{b3est} becomes equality;
  \item [(iii)] otherwise, both inequalities \eqref{b2est} and \eqref{b3est} are strong for all ${r>0}$. \end{itemize}
\end{theorem}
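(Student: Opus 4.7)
The plan is to deduce Theorem~\ref{propo-estim-r} directly from Theorem~\ref{th-from-ex}, exploiting the identification $\JJ_r = \BB_{\psi_r}$ made after Lemma~\ref{lemmFinAr}. Substituting $\beta = 1+rq$ and $\alpha = 2r\Re q$ into parts (1) and (2) of Theorem~\ref{th-from-ex} produces exactly the right-hand sides of \eqref{b2est} and \eqref{b3est}, so sharpness of both bounds is immediate from the existence of extremal functions supplied by that theorem. What remains is to translate the classification of those extremals into the language of the vector fields $f_1,f_2,f_3$ of Example~\ref{ex-gen-calc}.

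To this end I would first rewrite the trichotomy in Theorem~\ref{th-from-ex}(2) in terms of $r\Re q$. A direct calculation gives
\[
\Re\frac{\beta}{\alpha} = \frac{1+r\Re q}{2r\Re q} = \frac{1}{2}+\frac{1}{2r\Re q},
\]
so the conditions $\Re(\beta/\alpha) < 1$, $=1$, $>1$ correspond respectively to $r\Re q > 1$, $=1$, $<1$. The Schwarz functions listed as extremals in Theorem~\ref{th-from-ex} produce, through~\eqref{G*-1}, exactly the vector fields $f_1, f_2, f_3$ of Example~\ref{ex-gen-calc}.

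With these dictionaries in hand, the three clauses follow by inspection. Clause (i) combines Theorem~\ref{th-from-ex}(1), which characterizes equality in $|b_2|$ by $\omega(z) = e^{i\theta}z$ (i.e., $f = f_1$) for every $r > 0$, with Theorem~\ref{th-from-ex}(2)(i), which characterizes equality in $|b_3|$ for $f_1$ by $\Re(\beta/\alpha) < 1$, i.e., $r\Re q > 1$. Clause (ii) comes from parts (2)(ii) and (2)(iii) of Theorem~\ref{th-from-ex}: when $r\Re q < 1$ equality in $|b_3|$ forces $\omega(z) = e^{i\theta}z^2$, hence $f = f_2$; when $r\Re q = 1$ it forces the full $f_3$-family; in both situations $c_1 = 0$ or $|c_1| = |\rho| < 1$, so the bound $|b_2| = |\alpha c_1|/|\beta|^3$ is strictly smaller than $u_q(r)|1+rq|^2$. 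Clause (iii) is then the complementary statement.

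I do not expect a serious obstacle, only bookkeeping. The one delicate point is the boundary behaviour of the $f_3$-family: the degenerations $|\rho| = 1 \Rightarrow f_3 = f_1$ and $\rho = 0 \Rightarrow f_3 = f_2$ must be absorbed consistently into clauses (i) and (ii) so that every concrete $f$ is classified exactly once and no contradiction appears at the transitions $r\Re q = 1$. This will be the main care needed when writing the proof out in full.
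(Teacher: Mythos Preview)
Your proposal is correct and follows essentially the same route as the paper: both argue by substituting $\alpha=2r\Re q$, $\beta=1+rq$ into Theorem~\ref{th-from-ex}, translating the trichotomy $\Re(\beta/\alpha)\lessgtr1$ into $r\Re q\gtrless1$, and identifying the extremal Schwarz functions with the vector fields $f_1,f_2,f_3$ of Example~\ref{ex-gen-calc}. The boundary issue you flag is exactly the one the paper handles by taking $\rho\in\D$ (open) when invoking $f_3$ in clause~(ii).
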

Note that in the case (i) $G$ is defined by \eqref{Gr-rReg-gt1} and for $\, r\!\Re q>1$ we have $\left|b_3 \right|=  \frac{2r\Re q}{|1+rq|^5}\! \cdot \! \left|1+rq-4r \Re q\right|$.
\begin{proof}
Recall that $G \in \JJ_r $ is the resolvent of $f$ only if $G$ is the inverse function of $F=\Id+rf$ (cf. Lemma~\ref{lemmFinAr} and formula~\eqref{G*-1}).

Suppose that $r\Re q<1$.  Then by Theorem~\ref{th-from-ex}, equality in \eqref{b3est} holds only if $F(z)=z\psi_r(z^2e^{i \theta}),$
  $\theta \in \R$. In this case $f=f_2$  (cf. Example~\ref{ex-gen-calc}(ii)) and inequality in \eqref{b2est} is strong.

If $r\Re q=1$,  then by assertion (iii) of Theorem~\ref{th-from-ex}, equality in \eqref{b3est} holds only if $F(z)=z\psi\left(z\frac{\rho+e^{i \theta} z}{1+\overline{\rho}e^{i \theta} z}\right)$
with some $\rho \in \D$ and $\theta \in \R.$
In this case $f=f_3$  (cf. Example~\ref{ex-gen-calc}(iii)) and inequality in \eqref{b2est} is strong. Assertion (ii) is proven.

By assertions (1) and (2(i)) of Theorem~\ref{th-from-ex}, if $F(z)=z\psi_r(e^{i \theta }z)$ with some $\theta \in \R$,
 then equality  in \eqref{b3est} holds when $r\Re q>1$, while equality in  \eqref{b2est} holds for all $r>0$.
In this case $G$ fits to $f_1$  and is defined by \eqref{Gr-rReg-gt1} (cf. Example~\ref{ex-gen-calc}(iii)). So, assertion (i) is proven.
Moreover, we have $\left|b_3 \right|=  \frac{2r\Re q}{|1+rq|^5}\! \cdot \! \left|1+rq-4r \Re q\right|$ by  Remark~\ref{rem-coeff-connec}.

Otherwise, if $G \in \JJ_r$ is not resolvent of either $f_1$, or $f_2$, or $f_3$, then inequalities in \eqref{b2est} and \eqref{b3est} are strong for all $r>0$ due to Theorem~\ref{th-from-ex}.
This completes the proof.
\end{proof}

\begin{corol} Let $\{G_r\}_{r>0}$ be the resolvent family for a semi-complete vector field.
\begin{itemize}
  \item [(i)] Equality in \eqref{b2est} holds  for some $r>0$ if and only if it holds for all $r>0$.

  \item [(ii)] Equality in \eqref{b3est} holds for some $r<\frac1{\Re q}$ (respectively, $r>\frac1{\Re q}$) if and only if it holds for all $r<\frac1{\Re q}$  (respectively, $r>\frac1{\Re q}$).
\end{itemize}
\end{corol}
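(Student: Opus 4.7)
The plan is to read the corollary as a direct consequence of Theorem~\ref{propo-estim-r}, using the key observation that a resolvent family $\{G_r\}_{r>0}$ is generated by a \emph{single} semi-complete vector field $f$. Therefore the property ``$f$ equals one of $f_1$, $f_2$, $f_3$'' (for some choice of parameters $\theta$, $\rho$) is an $r$-independent attribute of the family, while the case distinction in Theorem~\ref{propo-estim-r} separates into ranges of $r\Re q$. Once these two remarks are combined, each direction of each ``iff'' becomes immediate.

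For assertion (i), I would invoke Theorem~\ref{propo-estim-r}(i) in both directions: if equality in \eqref{b2est} holds at some $r_0>0$, then (because it cannot hold under cases (ii) or (iii) of the theorem, where \eqref{b2est} is strong) we must be in case (i), i.e.\ $f=f_1$. But case (i) also asserts that whenever $f=f_1$ equality in \eqref{b2est} holds for \emph{every} $r>0$, which gives the converse direction.

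For assertion (ii), I would first translate the conditions $r<1/\Re q$, $r=1/\Re q$, $r>1/\Re q$ into the conditions $r\Re q<1$, $r\Re q=1$, $r\Re q>1$ used in Theorem~\ref{propo-estim-r}. If equality in \eqref{b3est} holds for some $r_0<1/\Re q$, then case (ii) of the theorem forces $f=f_2$ (the alternative $f=f_3$ requires $r\Re q=1$), and once $f=f_2$ is known, case (ii) applied with any $r<1/\Re q$ yields equality there as well. The analogous argument for $r>1/\Re q$ uses Theorem~\ref{propo-estim-r}(i): equality in \eqref{b3est} at some $r_0>1/\Re q$ forces $f=f_1$, and then case (i) gives equality in \eqref{b3est} for every $r>1/\Re q$.

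There is no genuine obstacle; the only care required is bookkeeping between the regimes $r\Re q<1$, $r\Re q=1$, $r\Re q>1$ and the three extremal vector fields $f_1$, $f_2$, $f_3$, together with the remark that the value $r=1/\Re q$ is correctly excluded in the statement of~(ii) because the extremal class there (namely $f_3$ with $\rho\in\overline\D$) degenerates into $f_2$ when $\rho=0$ and into $f_1$ when $|\rho|=1$, so the simple dichotomy across $r=1/\Re q$ would otherwise fail.
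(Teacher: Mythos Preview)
Your proposal is correct and matches the paper's approach: the corollary is stated there without explicit proof, as an immediate consequence of Theorem~\ref{propo-estim-r}, and your argument spells out precisely that deduction. The only minor point worth noting is that when handling the case $r_0<\frac{1}{\Re q}$ in part~(ii) you should also explicitly rule out $f=f_1$ (case~(i) of Theorem~\ref{propo-estim-r} only asserts equality in \eqref{b3est} for $r\Re q>1$, not that it fails for $r\Re q\le 1$); this follows from the underlying Theorem~\ref{th-from-ex}(2)(ii), which says that for $r\Re q<1$ equality holds \emph{only} for $f_2$.
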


Recall that $\JJ_r$ is a subclass of the class of hyperbolically convex functions normalized by $G(0)=0$ and $G'(0)=\frac1{1+rq}$\,.  It follows from a result in \cite{Ma-Mi-94} that $|b_2|\le u_q(r) |1+rq|^2 +\frac{|rq|^2}{|1+rq|^3}$  for hyperbolically convex functions and this bound is sharp, while for the best of our knowledge the sharp bound on $b_3$ is unknown.

Search for extremal functions to the Fekete--Szeg\"o functional $\Phi(\cdot,\lambda)$ over $\JJ_r$ is a more complicated problem. To solve it, in addition to notations~\eqref{u-v}, we denote
\begin{equation}\label{c-ro-mu}
c:=\frac{3}{2}-i\frac{1}{2}\tan (\arg q), \quad \rho:=\frac{|q|}{2\Re q} \quad \text{ and } \quad \mu:=\frac{2-\Re \lambda }{\left|\lambda-c\right|^2-\rho^2}\,.
\end{equation}

\nopagebreak Define a partition of the complex plane $\C=\bigcup\limits_{i=1}^5 S_i$ as follows (see Fig.\ref{fig1}) 
\begin{equation}\label{partition}
\left.
\begin{array}{ccl}
 S_1& = &\left\{\lambda \in \C : \Re \lambda \geq2, \lambda \notin {D_{\rho}(c)}\cup\{2, 2-i\tan(\arg q) \} \right\}, \\
 S_2&=&\{\lambda \in \C : \Re \lambda \leq2, \lambda \in \overline{D_{\rho}(c)}\setminus\{ 2, 2-i\tan(\arg q)\} \}, \\
S_3 &=&\{2,2-i\tan(\arg q)\}, \\
S_4&=&\{\lambda \in \C : \Re \lambda >2 \text{ and } \lambda \in {D_{\rho}(c)} \}, \\
S_5&=& \{\lambda \in \C : \Re \lambda <2 \text{ and } \lambda \notin \overline{D_{\rho}(c)} \}.
\end{array}
\right\}
\end{equation}

\begin{figure} [t]
  \centering
  \includegraphics[width=5cm]{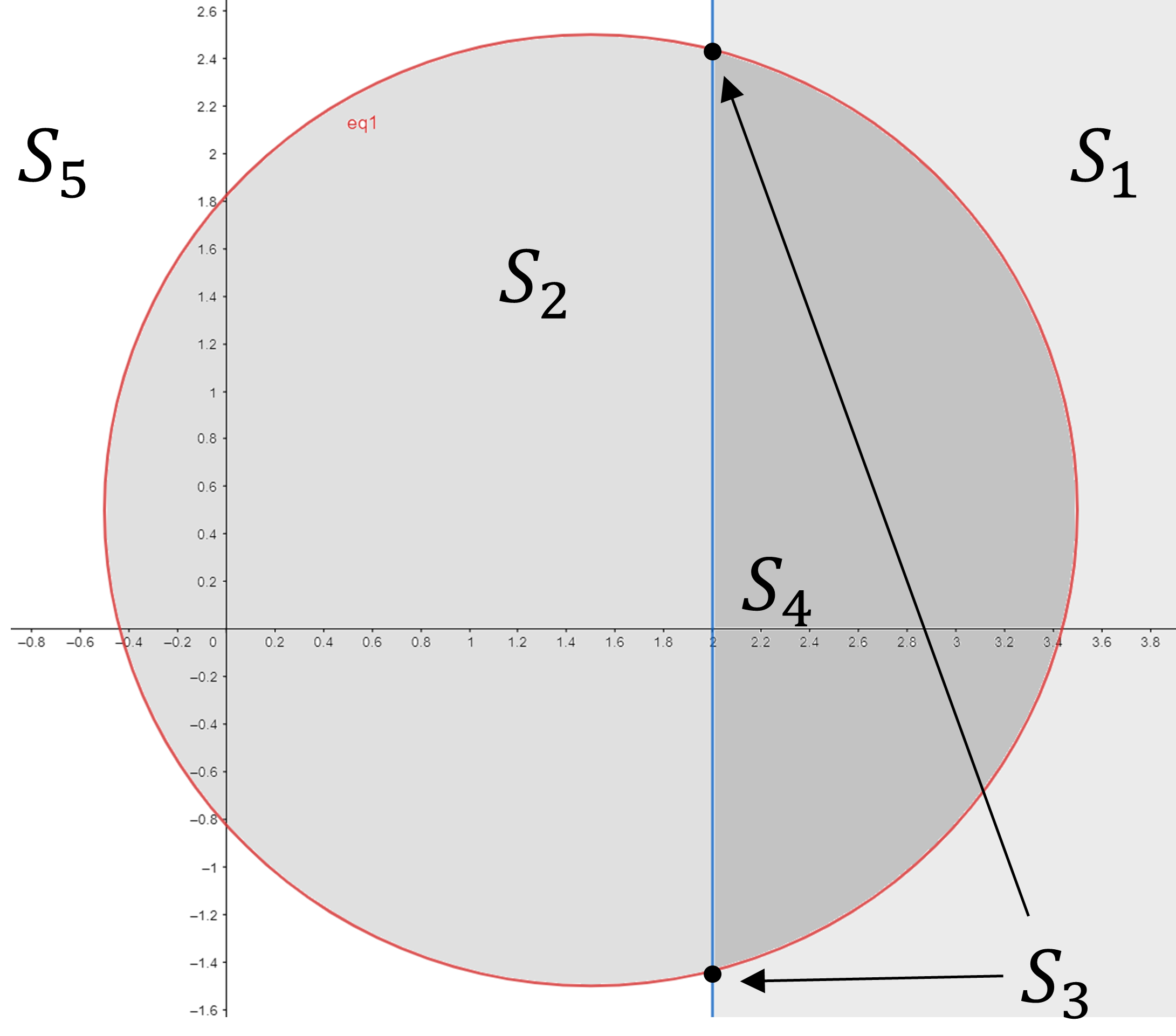}
  \caption{Partition of $\C$}
 \label{fig1}
\end{figure}
It can be easily seen that if $\,\lambda \in S_1$ (respectively, $\,\lambda \in S_2$, $\,\lambda \in S_3$) then one has $u_q(r)<v_q(r)$ (respectively $u_q(r)>v_q(r)$, $u_q(r)=v_q(r)$) for all $r>0$. In the case $\lambda \in S_4\bigcup S_5$, the relation between $u_q(r)$ and $v_q(r)$ depends on~$r$.

\begin{theorem}\label{th-phi-estim}
Fix $r>0$. Then the extremal functions for $\Phi(\cdot ,\lambda)$ over $\JJ_r$ can be described as follows.
\begin{itemize}
 \item [(1)] If $\,\lambda \in S_1$, then the only extremal function is the resolvent of $f_1$, see \eqref{Gr-rReg-gt1}.
 \item [(2)] If  $\,\lambda \in S_2$, then the only extremal function is the resolvent of $f_2.$
  \item [(3)] If $\,\lambda \in S_3$, then the only extremal function is the resolvent of $f_3.$
  \item [(4)] If $\,\lambda \in S_4$ then
   \begin{itemize}
     \item [] for $r<\mu$, the only extremal function is the resolvent of~$f_1;$

    \item  [] for $r>\mu$, the only extremal function is the resolvent of~$f_2;$

 \item  [] for $r=\mu$, the only extremal function is the resolvent of~$f_3.$
  \end{itemize}

   \item [(5)] If $\,\lambda \in S_5$ then
   \begin{itemize}
 \item  [] for $r>\mu$, the only extremal function is the resolvent of $f_1;$
    \item [] for $r<\mu$, the only extremal function is the resolvent of $f_2;$
     \item  [] for $r=\mu$, the only extremal function is the resolvent of $f_3.$
  \end{itemize}

\end{itemize}
\end{theorem}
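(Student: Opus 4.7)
The plan is to read off the answer from Theorem~\ref{tm-F-S-estim} applied with the specific data $\alpha=2r\Re q$ and $\beta=1+rq$ coming from $\psi_r$. By Example~\ref{ex-gen-calc} together with the identification $f=(F-\Id)/r$ of Lemma~\ref{lemmFinAr}, the three families of extremal $F$ described in Theorem~\ref{tm-F-S-estim}, namely $F(z)=z\psi_r(e^{i\theta}z)$, $F(z)=z\psi_r(e^{i\theta}z^2)$, and $F(z)=z\psi_r\bigl(z(\rho+e^{i\theta}z)/(1+\bar\rho e^{i\theta}z)\bigr)$, translate directly into the resolvents of $f_1$, $f_2$ and $f_3$, respectively. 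Hence the whole problem reduces to determining, for each pair $(\lambda,r)$, the sign of $|\beta-(2-\lambda)\alpha|^2-|\beta|^2$ and then invoking the uniqueness part of Theorem~\ref{tm-F-S-estim} in each regime.

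Next I would carry out the central algebraic step. Starting from $|A|^2-|\beta|^2=|A-\beta|^2+2\Re[\bar\beta(A-\beta)]$ with $A-\beta=-(2-\lambda)\alpha$, substituting $\bar\beta=1+r\bar q$ and using that $\alpha=2r\Re q$ is real, and then collecting in powers of $r$, a careful but routine expansion yields the clean identity
\begin{equation*}
|\beta-(2-\lambda)\alpha|^2-|\beta|^2=4r\Re q\,\bigl[r\Re q\cdot(|\lambda-c|^2-\rho^2)-(2-\Re\lambda)\bigr].
\end{equation*}
This is the crux of the proof: it explains the particular appearance of $c$ and $\rho$ from~\eqref{c-ro-mu} and shows that the circle $\{|\lambda-c|=\rho\}$ (where the coefficient of $r$ vanishes) together with the vertical line $\{\Re\lambda=2\}$ (where the constant term vanishes) produce precisely the partition~\eqref{partition}; moreover $\mu$ emerges as the value at which the linear-in-$r$ bracket changes sign.

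With this identity in hand the five cases become elementary sign chasing. On $S_1$ both summands in the bracket are $\ge 0$ and not simultaneously zero, so the bracket is strictly positive for every $r>0$; this places us in case~(i) of Theorem~\ref{tm-F-S-estim} and singles out the resolvent of $f_1$. The set $S_2$ is the mirror case, giving $f_2$. On $S_3$ a direct substitution (using $\rho^2=1/(4\cos^2(\arg q))$) shows that both summands vanish identically in $r$, so $|A|=|\beta|$ and case~(iii) of Theorem~\ref{tm-F-S-estim} applies, yielding $f_3$. On $S_4$ and $S_5$ the two summands have opposite signs, the bracket is strictly monotone and linear in $r$, so it changes sign exactly once at the critical value $r=\mu$ determined by $r\Re q\cdot(|\lambda-c|^2-\rho^2)=2-\Re\lambda$; the sub-subcases $r<\mu$, $r=\mu$, $r>\mu$ then match $f_1$, $f_3$, $f_2$ in the order dictated by which sign prevails first. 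Uniqueness of the extremal mapping in each regime is inherited from the uniqueness clauses of Theorem~\ref{tm-F-S-estim}, which in turn rest on Lemma~\ref{lem-omega}. The principal obstacle is the algebraic identity above; once the right grouping producing $|\lambda-c|^2-\rho^2$ is recognised, the remainder of the argument is mechanical.
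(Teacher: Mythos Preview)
Your approach is essentially the same as the paper's: apply Theorem~\ref{tm-F-S-estim} with $\alpha=2r\Re q$, $\beta=1+rq$, translate the three extremal families of $F$ into the resolvents of $f_1,f_2,f_3$ via Example~\ref{ex-gen-calc}, and then determine the sign of $|\beta-(2-\lambda)\alpha|^2-|\beta|^2$ to decide which case of Theorem~\ref{tm-F-S-estim} is in force. The paper records the outcome of your algebraic computation directly as the dichotomy $2-\Re\lambda\lessgtr r\Re q\,(|\lambda-c|^2-\rho^2)$ and then reads off the same five-way case split; your explicit identity is a welcome justification of that step. One small caveat you share with the paper: solving $r\Re q\,(|\lambda-c|^2-\rho^2)=2-\Re\lambda$ gives $r=\mu/\Re q$ rather than $r=\mu$ with the stated definition of $\mu$, so be aware that the identification ``critical value $=\mu$'' is literally correct only up to the factor $\Re q$.
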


\begin{proof}
It follows from \eqref{Phi-1} in Proposition~\ref{coro-resG} and notations \eqref{u-v} that
\begin{equation*}\label{proof-th-phi-estim1}
 \left|\Phi(G_r,\lambda) \right| \le \left\{
 \begin{array}{cc}
 v_q(r), & \text{ if } \quad w_q(r) > 1,\vspace{2mm}\\
 u_q(r), & \text{ if } \quad w_q(r) \leq 1.
 \end{array}
 \right.
\end{equation*}
Thus we have to verify whether (and where) $w_q(r)$ is greater than $1$, or conversely. This verification results in explicit relation
\begin{equation}\label{proof-th-phi-estim3}
 \left|\Phi(G_r,\lambda) \right| \le \left\{
 \begin{array}{cc}
v_q(r), & \text{ if } \,\, 2-\Re \lambda < r   \Re {q}\left(|\lambda-c|^2-\rho^2\right),\vspace{2mm}\\
u_q(r), & \text{ if } \,\, 2-\Re \lambda \geq r   \Re {q}\left(|\lambda-c|^2-\rho^2\right).
 \end{array}
 \right.
\end{equation}

Consider now the three cases: (a) $2-\Re \lambda < r   \Re {q}\left(|\lambda-c|^2-\rho^2\right)$,\linebreak (b) $2-\Re \lambda > r   \Re {q}\left(|\lambda-c|^2-\rho^2\right)$, and (c) $2-\Re \lambda = r   \Re {q}\left(|\lambda-c|^2-\rho^2\right).$

Theorem~\ref{tm-F-S-estim} with $\alpha=2r\Re q$ and $\beta =1+rq$ implies that  estimate \eqref{proof-th-phi-estim3} is sharp. Furthermore, the same theorem describes all of the extremal cases. Namely,
 \begin{equation*}\label{proof-th-phi-estim11}
\hspace{-0.5mm} \left|\Phi(G_r,\lambda) \right|\!=\!\! \left\{
 \begin{array}{cc}
v_q(r), & \hspace{-4mm}\text{ in case  (a) only when } G \text{ is the resolvent of } f_1,\vspace{1mm}\\
u_q(r), &\hspace{-4mm} \text{ in case  (b) only when } G \text{ is the resolvent of } f_2,\vspace{1mm}\\
u_q(r), &\hspace{-4mm} \text{ in case  (c) only when } G \text{ is the resolvent of }  f_3,
 \end{array}
 \right.\hspace{-5mm}
 \end{equation*}
(cf. Example~\ref{ex-gen-calc}).  We now notice that
\begin{itemize}
\item the case (a) occurs for all $r>0$ whenever $\lambda\in S_1$, for $r<\mu$ whenever $\lambda\in S_4$,  and for $r>\mu$ whenever $\lambda\in S_5$;
\item the case (b) occurs for all $r>0$ whenever $\lambda\in S_2$, for $r>\mu$ whenever $\lambda\in S_4$,  and for $r<\mu$ whenever $\lambda\in S_5$;
\item the case (c) occurs for all $r>0$ whenever $\lambda\in S_3$ and for $r=\mu$ whenever $\lambda\in S_4\cup S_5$.
\end{itemize}
Combining these facts, we complete the proof.
\end{proof}

\medskip

Now we pass to the set of {\sl all} resolvents
$$\JJ:=\bigcup_{r>0}\JJ_r.$$

Our aim is to solve Problems 1 and 2 over $\JJ$. For simplicity we concentrate on the case $q=1.$

We first establish estimates and extremal functions for coefficients  $b_2$~ and~$b_3$.

\begin{theorem}\label{pr-5-4}
Let $G\in\JJ$. Then
 \begin{itemize}
   \item [(a)] $|b_2| \le \frac{8}{27}$ with equality only if
   $G$ is the resolvent of~$f_1$ and belongs to $\JJ_{\frac{1}{2}}$. In this case
    $G(z)=\displaystyle\frac{2z}{ze^{i\theta}+\frac32+ \sqrt{(ze^{i\theta}-\frac32)^2+4 ze^{i\theta}}}\,$, hence $|b_3| = \frac{16}{243}\,.$
    \vspace{1mm}
   \item [(b)] $|b_3| \le \frac{27}{128}$ with equality only if $G$  is the resolvent of $f_2$ and belongs to $\JJ_{\frac{1}{3}}$. In this case $b_{2n}=0$.
 \end{itemize}
\end{theorem}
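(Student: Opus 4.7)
The plan is to reduce the supremum over the union $\JJ=\bigcup_{r>0}\JJ_r$ to an elementary one-variable optimization in $r$, using the sharp bounds \eqref{b2est} and \eqref{b3est} from Proposition~\ref{coro-resG} together with the classification of extremal functions in Theorem~\ref{propo-estim-r}. With $q=1$, those bounds become
\[
|b_2|\le\frac{2r}{(1+r)^3},\qquad |b_3|\le\frac{2r}{(1+r)^5}\cdot\max\bigl(1+r,\,|1-3r|\bigr),
\]
so it suffices to maximize each right-hand side in $r>0$.

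For part (a), I would differentiate $\varphi(r)=\frac{2r}{(1+r)^3}$ to get $\varphi'(r)=\frac{2-4r}{(1+r)^4}$, so the unique critical point is $r=\tfrac12$, producing $\varphi(\tfrac12)=\tfrac{8}{27}$. Equality therefore requires simultaneously (i) $r=\tfrac12$ and (ii) equality in \eqref{b2est} inside $\JJ_{1/2}$. By Theorem~\ref{propo-estim-r}(i), the latter forces $f=f_1$; the explicit form of $G$ follows by specializing \eqref{Gr-rReg-gt1} at $r=\tfrac12$, $q=1$, and the resulting value of $|b_3|$ is then computed from \eqref{b3est} (which becomes equality since at $r=\tfrac12$ we have $r\Re q<1$ but $f=f_1$; actually we evaluate $\frac{2r}{(1+r)^5}\cdot|1+r-4r|=\frac{1}{(3/2)^5}\cdot\tfrac12=\tfrac{16}{243}$ directly, using the formulas \eqref{b2} and \eqref{b3}).

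For part (b), I would split the domain at the point where the two branches of $\max(1+r,|1-3r|)$ coincide. A direct check shows $1+r\ge|1-3r|$ precisely for $r\in(0,1]$, while $|1-3r|=3r-1$ dominates for $r>1$. On $(0,1]$ we maximize $\psi_1(r)=\frac{2r}{(1+r)^4}$, whose derivative vanishes at $r=\tfrac13$ and yields $\psi_1(\tfrac13)=\tfrac{27}{128}$. On $(1,\infty)$ we maximize $\psi_2(r)=\frac{2r(3r-1)}{(1+r)^5}$; computing the numerator of $\psi_2'$ gives $-2(9r-1)(r-1)$, so $\psi_2$ is strictly decreasing on $(1,\infty)$ with supremum $\psi_2(1)=\tfrac18<\tfrac{27}{128}$. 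Hence the global maximum is $\tfrac{27}{128}$, attained only at $r=\tfrac13$. Since $r\Re q=\tfrac13<1$, Theorem~\ref{propo-estim-r}(ii) says that equality in \eqref{b3est} at $r=\tfrac13$ forces $f=f_2$.

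Finally, to obtain the vanishing of the even coefficients of the extremal $G$, I would observe that $f_2(z)=z\frac{1+e^{i\theta}z^2}{1-e^{i\theta}z^2}$ is an odd function of $z$, hence so is $F=\Id+rf_2$, and therefore its holomorphic inverse $G=F^{-1}$ is odd as well, giving $b_{2n}=0$. The main (mild) obstacle is bookkeeping the piecewise structure in $r$ in part (b) and verifying that at the interior critical point we are indeed in the regime $r\Re q<1$ so that Theorem~\ref{propo-estim-r}(ii) applies and identifies the extremal field uniquely as $f_2$.
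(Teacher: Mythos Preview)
Your proposal is correct and follows essentially the same approach as the paper: reduce to the sharp one-parameter bounds from Proposition~\ref{coro-resG} with $q=1$, optimize in $r$ (getting $r=\tfrac12$ for $|b_2|$ and, after splitting at $r=1$, $r=\tfrac13$ for $|b_3|$), and invoke Theorem~\ref{propo-estim-r} to identify the extremal generator; the oddness argument for $b_{2n}=0$ is exactly the paper's. Your extra explicit derivative computations (e.g., the factorization $-2(9r-1)(r-1)$) are sound and just make the monotonicity claims more transparent.
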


\begin{proof}
Let $G \in \JJ_r$ for some $r>0$.  Inequality \eqref{b2est} implies $$\left|b_2 \right| \leq \max_{r>0}\frac{2r}{(1+r)^3}=\frac8{27}\,,$$
where the maximum is attained at $r=\frac12\,.$ Otherwise $\frac{2r}{(1+r)^3}<\frac8{27}.$

Furthermore, it follows from Theorem~\ref{propo-estim-r} that equality $|b_2|=\frac8{27}$ holds only if $G$ is the resolvent of $f_1(z)=z\frac{1+ze^{i \theta}}{1-ze^{i \theta}}$, $\theta \in\R$. Then $b_2=\frac8{27}e^{i\theta}$ and $b_3=\frac{16}{243}e^{2i\theta}$. In addition, $G\in\JJ_{\frac{1}{2}}$ is defined by formula \eqref{Gr-rReg-gt1} with $q=1$ and $r=\frac{1}{2}$. This proves assertion (a).

Further, $\left|b_3 \right|\leq  \frac{2r}{(1+r)^5}\! \cdot \! \max \left( 1+r,|3r-1|\right)$ by \eqref{b3est}.
 For $r> 1$ we have $\left|b_3 \right|\leq  \frac{2r(3r-1)}{(1+r)^5}$. Since $\frac{2r(3r-1)}{(1+r)^5}$ is a decreasing function, $\left|b_3 \right|< \frac{1}{8}$ for all $r> 1$.
  For $0<r\leq 1$ we have $\left|b_3 \right|\leq  \frac{2r}{(1+r)^4}$. Then $\max \frac{2r}{(1+r)^4}$ is attained at $r=\frac{1}{3}\,$ and equals $\frac{27}{128}$. By statement (ii) of Theorem~\ref{propo-estim-r},  the extremal resolvent $G \in \J_{\frac{1}{3}}$ fits to $f_2(z)=z\frac{1+e^{i \theta} z^2}{1-e^{i \theta} z^2}$. This implies $G$ is odd, so $b_{2n}=0$ for all $n.$
\end{proof}

Next we obtain estimate on the Fekete--Szeg\"{o} functional $\Phi(\cdot,\lambda)$ over $\JJ$.
 Note that for $q=1$ the function $u_q(r)=\frac{2r}{(1+r)^5}$ attains the maximal value $k=\frac{1}{2}\cdot \left(\frac{4}{5}\right)^5$ at the point $r=\frac{1}{4}$.

\begin{theorem}\label{th-appl for fekete}
Let $G\in\JJ$. Then
\begin{equation*}\label{fekete-sego1}
\left|\Phi(G,\lambda)\right|\leq k\cdot \max(1, |2\lambda-3|)
\end{equation*}
for all $\lambda \in \C$. Moreover,
  the estimate $\left|\Phi(G,\lambda)\right|\leq k$ is sharp for every $\lambda$ such that $|2\lambda-3|\le1$.
\end{theorem}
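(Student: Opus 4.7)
The plan is to specialize the $\JJ_r$-level estimate from Proposition~\ref{coro-resG} to $q=1$ and then dominate the $r$-dependent bound uniformly by $k\,\max(1,|2\lambda-3|)$ via a one-line triangle inequality. With $q=1$ the quantities from \eqref{u-v} become
\[
u(r):=u_1(r)=\frac{2r}{(1+r)^5},\qquad v(r):=v_1(r)=\frac{2r\,|1+r(2\lambda-3)|}{(1+r)^6},
\]
since $1+rq-2(2-\lambda)r\Re q=1+r(2\lambda-3)$. A direct differentiation shows $u$ attains its maximum $k=\tfrac12(4/5)^5$ at $r=1/4$, which will govern the bound after the supremum in $r$ is taken.

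\textbf{Main inequality.} For any $G\in\JJ_r$, Proposition~\ref{coro-resG} gives $|\Phi(G,\lambda)|\le\max(u(r),v(r))$. By the triangle inequality $|1+r(2\lambda-3)|\le 1+r|2\lambda-3|$. I would split into two cases:
\begin{itemize}
\item If $|2\lambda-3|\le 1$, then $|1+r(2\lambda-3)|\le 1+r$, so $v(r)\le u(r)$ and $\max(u(r),v(r))\le u(r)\le k$.
\item If $|2\lambda-3|>1$, then $1+r|2\lambda-3|\le |2\lambda-3|(1+r)$, whence $v(r)\le|2\lambda-3|\,u(r)\le k\,|2\lambda-3|$, while $u(r)\le k\le k\,|2\lambda-3|$.
\end{itemize}
Both cases combine into $|\Phi(G,\lambda)|\le k\,\max(1,|2\lambda-3|)$, and since this holds for all $r>0$ and every $G\in\JJ_r$, it holds on $\JJ=\bigcup_{r>0}\JJ_r$. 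This step is routine; the only small care point is making sure the bound $1+r|s|\le|s|(1+r)$ is used only when $|s|\ge 1$, otherwise the case analysis reverses.

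\textbf{Sharpness when $|2\lambda-3|\le 1$.} The region $|2\lambda-3|\le 1$ is exactly the closed disk $\overline{D_{1/2}(3/2)}$. For $q=1$ one has $c=3/2$, $\rho=1/2$, and since this disk is contained in $\{\Re\lambda\le 2\}$, the set $S_4$ defined in \eqref{partition} is empty; hence $\overline{D_{1/2}(3/2)}=S_2\cup S_3$, with $S_3=\{2\}$. By Theorem~\ref{th-phi-estim}, for every $\lambda\in S_2$ (resp.\ $\lambda=2$) and every $r>0$, the resolvent of $f_2$ (resp.\ $f_3$) realizes $|\Phi(G_r,\lambda)|=u(r)$. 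Choosing $r=1/4$ in each case produces $G\in\JJ_{1/4}\subset\JJ$ with $|\Phi(G,\lambda)|=u(1/4)=k$, so the constant $k$ cannot be improved.

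\textbf{Expected difficulty.} There is no real obstacle: the inequality follows by combining the pointwise-in-$r$ estimate with a triangle inequality, and sharpness is immediate once one observes that $S_4$ collapses to the empty set when $q=1$, so the optimizer at the optimal radius $r=1/4$ is directly supplied by Theorem~\ref{th-phi-estim}. The only minor bookkeeping is verifying that the partition \eqref{partition} degenerates as claimed for $q=1$.
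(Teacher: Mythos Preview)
Your proof is correct and actually more streamlined than the paper's. Both rely on Proposition~\ref{coro-resG} applied with $q=1$ and on maximizing $u_1(r)$ at $r=1/4$, and the sharpness argument via Theorem~\ref{th-phi-estim} (with $S_4=\emptyset$, $S_3=\{2\}$, $\overline{D_{1/2}(3/2)}=S_2\cup S_3$) is essentially identical in both write-ups.

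The difference lies in how the non-sharp region $|2\lambda-3|>1$ is handled. The paper keeps the partition $S_1,\dots,S_5$ from \eqref{partition} in play, treats $S_1$ and $S_5$ separately, writes $w_1(r)=\bigl|1+\frac{2r}{1+r}(\lambda-2)\bigr|$, and bounds it by expanding the square and comparing $\frac{r}{1+r}<1$ term by term (introducing an auxiliary substitution $\nu=\frac{1}{2-\lambda}$ for $S_5$). This yields the \emph{strict} inequality $v_1(r)<u_1(r)\,|2\lambda-3|$ on $S_1\cup S_5$, information which the paper then exploits in Remark~\ref{rem-S1} to note that the bound is improvable there. Your approach bypasses the partition entirely: from $|1+r(2\lambda-3)|\le 1+r|2\lambda-3|$ and the elementary observation $1+r s\le s(1+r)$ for $s\ge1$, you obtain $v(r)\le|2\lambda-3|\,u(r)$ in a single line. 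This is cleaner and fully sufficient for the theorem as stated, though it does not by itself flag the strict inequality that motivates Remark~\ref{rem-S1}.
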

\begin{proof}
 Substituting $q=1$ in \eqref{c-ro-mu}, we get $c=\frac{3}{2}$, $\rho=\frac{1}{2}$ and $\mu=\frac{4(2-\Re \lambda)}{\left|2\lambda-3\right|^2-1}\,$.

Now we relate to the partition of $\C$ introduced in \eqref{partition}.
 First, let $\lambda \in S_1=\left\{\lambda \in \C : \Re \lambda \geq2 \right\} \setminus\{ 2\}$. Then case (a) from the proof of Theorem~\ref{th-phi-estim} occurs. Therefore $\left|\Phi(G,\lambda)\right|\leq v_1(r)$ by \eqref{proof-th-phi-estim3}.
 Let us estimate $ v_1(r)$ in \eqref{u-v} as follows
\begin{eqnarray*}\label{v-estim1}
 % \nonumber % Remove numbering (before each equation)
v_1(r) &=&u_1(r)\cdot  \left| 1+\frac{2r}{r+1}(\lambda -2)\right| \\
&=& u_1(r) \left(1+\frac{r}{r+1}\cdot 4 \Re (\lambda - 2)+\left(\frac{r}{r+1}\right)^2\cdot 4|\lambda-2|^2\right)^{\frac{1}{2}} \\
  &<& u_1(r) \left(1+4 \Re (\lambda - 2)+ 4|\lambda-2|^2\right)^{\frac{1}{2}}= u_1(r)\cdot |2\lambda-3|.
 \end{eqnarray*}

Thus, $\left|\Phi(G,\lambda)\right|< k\cdot |2\lambda-3|$ for all $r>0$.

 Second, let $\lambda \in S_5=\{\lambda \in \C : \Re \lambda <2 \text{ and } \lambda \notin \overline{D_{\frac{1}{2}}(\frac{3}{2})}\}$. By the proof of Theorem~\ref{th-phi-estim}, for $0<r \leq\mu$  inequality $\left|\Phi(G,\lambda)\right|\leq u_1(r)$ holds, while for  $r>\mu$  inequality  $\left|\Phi(G,\lambda)\right|\leq v_1(r)$ holds.

We claim that $\left|\Phi(G,\lambda)\right| <k\cdot |2 \lambda -3|$ for all $r>0$.
The notations $\nu:=\frac{1}{2-\lambda}$, $a=\Re \nu $ and $b=\Im \nu$ is convenient to estimate $ v_1(r)$. Indeed, for all $\lambda \in S_5$ we have $\mu=\frac{a}{1-a}$ and $0<a<1$.  Remark also, that $r>\mu$ implies $\frac{r}{r+1}-a>0$. Then
\begin{eqnarray*}\label{v-estim3}
v_1(r) &=& u_1(r) \cdot  \left| 1+\frac{2r}{r+1}(\lambda -2)\right|\\
&=& u_1(r) \left(1+\frac{r}{r+1}\cdot \frac{4}{a^2+b^2}\left(\frac{r}{r+1}-a\right)\right)^{\frac{1}{2}}\\
& < & u_1(r) \left(1+\frac{4}{a^2+b^2}\left(1-a\right)\right)^{\frac{1}{2}}= u_1(r)\cdot |2 \lambda -3|.
 \end{eqnarray*}
Thus, for all  $\lambda \in S_5$ we have $\left|\Phi(G,\lambda)\right|\leq k\cdot |2\lambda-3|$.

Further, if  $\lambda\in S_2=\{\lambda \in \C : |2\lambda-1| \le 3\}\setminus\{2\}$ (respectively, $\lambda=2$), then by Proposition~\ref{coro-resG} and  the proof of Theorem~\ref{th-phi-estim}, $\left|\Phi(G,\lambda)\right|\leq u_1(r)$ for all $r>0$ with equality only when $G$ is the resolvent of $f_2$ (respectively, of $f_3$) and belongs to $\JJ_\frac{1}{4}$.
 Thus for $|2\lambda-3| \leq 1$  the estimate $\left|\Phi(G,\lambda)\right|\leq\max u_1(r)= k$ is sharp.

Since $S_4=\emptyset$ as $q=1$, the proof is complete.
 \end{proof}

{\it Inter alia}, Theorem~\ref{th-appl for fekete} includes the sharp estimate on the
Hankel determinant $H_1^2$ over $\JJ$,
namely, $|H_1^2(G)|\le  k$.

\begin{remar}\label{rem-S1}
We emphasize that our estimate is sharp for $\lambda \in S_2\cup S_3$, while for  $\lambda \in S_1\cup S_5$ it can be improved.

For instance, let $\lambda \in S_1$. Denote $a:=\Re (\lambda-2)\ge0$ and $b:=|\lambda-2|^2\ge a^2$. We already know that $|\Phi(G,\lambda)|\le v_1(r)$ as $G\in\JJ_r$. Hence we have to estimate $\sup\limits_{r>0}v_1(r).$
Consider the function $v(t):=v_1\left(\frac{t}{1-t}\right)=2t(1-t)^4|1+2t(\lambda-2)|$ with $t=\frac{r}{1+r}\in(0,1)$.
A straightforward calculation leads to
$$
\left(\frac{v(t)^2}2\right)'\!\!=\! 2t(1-t)^7\left( 1-5t -24bt\phi(t) \right))\ \mbox{ with }\ \phi(t)\!=\!t^2-\left(\frac13-\frac{11a}{12b} \right)t -\frac{a}{4b}\,.
$$
Denote $t_1:=\frac12\left[ \frac13- \frac{11a}{12b} +\sqrt{\left(\frac13- \frac{11a}{12b}\right)^2 +\frac ab} \right],$ the largest root of $\phi$. One sees that $\frac14<t_1<1$ and concludes that $v$ is a decreasing function as $t>t_1.$ Therefore the problem concerns $\sup\limits_{0<t<t_1}v(t).$ Since $t(1-t)^4$ takes its maximal values $k$ at $t=\frac15$, while $|1+2t(\lambda-2)|$ is an increasing function, we summarize that $v(t)\le k\cdot |1+2t_1(\lambda-2)|<k\cdot|2\lambda-3|$.
\end{remar}

\medskip

To complete the paper, recall that every element of any resolvent family (that is, each $G\in\JJ$) is a univalent self-mapping of the open unit disk that preserves zero. Usually, in geometric function theory different coefficient functionals are studied over families of  {\it normalized} univalent functions. By this reason we introduce now the class of {\it normalized resolvents}
\[
\widetilde{\JJ}:=\left\{g\in\Hol(\D,\C):\   \frac g{1+r}\in \JJ_r\quad \text{for some }r>0\right\}
\]
(obviously, if $g\in\widetilde{\JJ}$, then $g(0)=g'(0)-1=0$) and solve Problems 1 and~2 over $\widetilde{\JJ}$.

\begin{theorem}\label{th-normalized1}
  Let $g\in \widetilde{\JJ}$ with $g(z)=z+b_2z^2+b_3z^3+\ldots$. Then
 \begin{itemize}
   \item [(a)] $|b_2| \le \frac{1}{2}$ with equality only if
   $\frac{g}{2}$ is the resolvent of~$f_1$, in which case $|b_3| = \frac{1}{4}$.
   \item [(b)] $|b_3| \le \frac{8}{27}$  with equality only if
   $\frac{2g}{3}$ is the resolvent of~$f_2$, in which case $|b_{2n}| = 0$.
  \item [(c)] $\left|\Phi(g,\lambda) \right| \le \frac{8}{27}\cdot \max(1,|2\lambda-3|)$ for all $\lambda \in \C$.
 \end{itemize}
\end{theorem}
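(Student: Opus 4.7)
The plan is to reduce estimates over $\widetilde{\JJ}$ back to the class $\JJ_r$ (with $q=1$) analyzed in Theorem~\ref{propo-estim-r} and Theorem~\ref{th-appl for fekete}, and then optimize over $r>0$. By definition, every $g\in\widetilde{\JJ}$ can be written as $g=(1+r)G_r$ for some $r>0$ and some $G_r\in\JJ_r$. If I write the Taylor expansion $G_r(z)=\frac{z}{1+r}+\tilde b_2 z^2+\tilde b_3 z^3+\cdots$, the scaling gives $b_n=(1+r)\tilde b_n$ for $n\ge 2$. Thus Proposition~\ref{coro-resG}, Theorem~\ref{propo-estim-r}, and Theorem~\ref{th-appl for fekete}, evaluated at $q=1$, translate into bounds on $|b_2|$, $|b_3|$, and $|\Phi(g,\lambda)|$ as explicit functions of $r$, and the theorem will follow by maximizing in $r$.

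For part (a), multiplying \eqref{b2est} by $(1+r)$ yields $|b_2|\le\frac{2r}{(1+r)^2}$, which is maximized at $r=1$ with value $\tfrac12$. Tracking back, equality in \eqref{b2est} for a fixed $r$ holds only when $G_r$ is the resolvent of $f_1$ (cases (ii) and (iii) of Theorem~\ref{propo-estim-r} exclude the other options), so the unique extremal $g$ is $2G_1$, i.e.\ $g/2$ is the resolvent of $f_1$. At that point, Remark~\ref{rem-coeff-connec} with $\alpha=\beta=2$ gives $|\tilde b_3|=\frac{|\alpha||2\alpha-\beta|}{|\beta|^5}=\tfrac{1}{8}$, so $|b_3|=2|\tilde b_3|=\tfrac{1}{4}$.

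For part (b), multiplying \eqref{b3est} by $(1+r)$ produces $|b_3|\le\frac{2r}{(1+r)^4}\max\{1+r,\,|1-3r|\}$. I would split into the subcases $r\le1$ and $r>1$. For $0<r\le1$ the bound simplifies to $\frac{2r}{(1+r)^3}$, maximized at $r=\tfrac12$ with value $\tfrac{8}{27}$. For $r>1$ the bound becomes $\frac{2r(3r-1)}{(1+r)^4}$, whose derivative vanishes at $r=\frac{9+\sqrt{57}}{12}$ and whose maximum value there is strictly below $\tfrac{8}{27}$; this routine but slightly tedious verification is the main technical step. The overall optimum therefore sits at $r=\tfrac12$, where $r\Re q=\tfrac12<1$ places us in case (ii) of Theorem~\ref{propo-estim-r}, forcing $f=f_2$ and hence $g=\tfrac32 G_{1/2}$. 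Since $f_2$ is odd, so is its resolvent and so is $g$, yielding $b_{2n}=0$.

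For part (c), the crucial identity is
\[
\Phi(g,\lambda)=b_3-\lambda b_2^2=(1+r)\tilde b_3-\lambda(1+r)^2\tilde b_2^2=(1+r)^2\Phi(G_r,\lambda),
\]
using that the leading coefficient of $G_r$ equals $\frac{1}{1+r}$. By Theorem~\ref{th-appl for fekete}, $|\Phi(G_r,\lambda)|\le\max\{u_1(r),v_1(r)\}$. For $\lambda\in S_2\cup S_3$ this maximum equals $u_1(r)$, so $(1+r)^2|\Phi(G_r,\lambda)|\le\frac{2r}{(1+r)^3}\le\tfrac{8}{27}$. For $\lambda\in S_1\cup S_5$ the proof of Theorem~\ref{th-appl for fekete} already established $v_1(r)\le u_1(r)|2\lambda-3|$, whence $(1+r)^2|\Phi(G_r,\lambda)|\le\frac{2r}{(1+r)^3}|2\lambda-3|\le\tfrac{8}{27}|2\lambda-3|$ (note $|2\lambda-3|\ge1$ throughout $S_1\cup S_5$, and $S_4=\emptyset$ when $q=1$). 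Combining these cases gives $|\Phi(g,\lambda)|\le\tfrac{8}{27}\max\{1,|2\lambda-3|\}$, completing the proof.
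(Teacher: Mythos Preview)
Your proposal is correct and follows essentially the same route the paper indicates (it merely says the proof is ``similar to those of Theorems~\ref{pr-5-4} and~\ref{th-appl for fekete}''), namely scaling back to $\JJ_r$ via $g=(1+r)G_r$, invoking Proposition~\ref{coro-resG} and Theorem~\ref{propo-estim-r}, and then optimizing the resulting one-variable bounds in $r$. Two minor points: the inequality $|\Phi(G_r,\lambda)|\le\max\{u_1(r),v_1(r)\}$ is \eqref{Phi-1} in Proposition~\ref{coro-resG} rather than Theorem~\ref{th-appl for fekete}, and in the $S_5$ case for $r\le\mu$ the bound $v_1(r)\le u_1(r)|2\lambda-3|$ is not literally in the cited proof but follows at once from $w_q(r)\le1$ together with $|2\lambda-3|>1$.
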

Consequently, the inequality $\left|H^2_1(g) \right| \le \frac{8}{27}$ is sharp.

The proof of Theorem~\ref{th-normalized1} is similar to those of Theorems~\ref{pr-5-4} and~ \ref{th-appl for fekete}.

\medskip

\end{document}